\documentclass[11pt]{amsart}

\usepackage[T1]{fontenc}

\usepackage{latexsym,enumerate}

\usepackage{mathrsfs}
\usepackage{amsmath,amssymb,amsthm,amsfonts,latexsym}
\usepackage[bookmarks]{hyperref}
\hypersetup{backref, colorlinks=true}
\hypersetup{backref, colorlinks=true, colorlinks   = true,
	urlcolor     = blue, linkcolor = blue, citecolor   = magenta
}
\def\adh#1{\overline{#1}}

\let\=\partial

\newtheorem {pro}{Proposition}[section]
\newtheorem {thm}[pro]{Theorem}%[section]
\newtheorem {cor}[pro]{Corollary}%[section]
\newtheorem{lem}[pro]{Lemma}

\theoremstyle{definition}
 \newtheorem {rem}[pro]{Remark}%[section]
\newtheorem {dfn}[pro]{Definition}%[section]

\newtheorem {obs}[pro]{Observation}

\newcommand{\sca}{\mathcal{S}}

\newcommand{\dri}{\mathring{d}}

\newcommand{\R}{\mathbb{R}}

\newcommand{\N}{\mathbb{N}}
\newcommand{\cc}{\mathscr{C}}

\newcommand{\et}{\quad \mbox{and} \quad }

\newcommand{\spl}{\mathcal{D}^+}

\newcommand{\bou}{ {\bf B}}

\newcommand{\id}{\mathring{d}}

\newcommand{\D}{\mathcal{D}}

\newcommand{\ep}{\varepsilon}

\newcommand{\pa}{\partial}

\newcommand{\om}{\infty}

\newcommand{\supp}{\mbox{\rm supp}}
\newcommand{\xo}{{x_0}}

\title[]{Inner Lipschitz approximation in o-minimal structures}

\makeatletter

\@namedef{subjclassname@2020}{%
  \textup{2020} Mathematics Subject Classification}

\makeatother

\author[N. Nguyen A. Valette and G. Valette]{Nhan Nguyen, Anna Valette and Guillaume Valette}

\address[N. Nguyen]{FPT University, Danang, Vietnam}
 \email{nguyenxuanvietnhan@gmail.com}
\address[A. Valette]{Katedra Teorii Optymalizacji i Sterowania, Wydzia\l\ Matematyki i Informatyki Uniwersytetu Jagiello\'nskiego, ul. S. \L ojasiewicza 6, 30-348 Krak\'ow, Poland}
 \email{anna.valette@im.uj.edu.pl}
\address[G. Valette]{Instytut Matematyki Uniwersytetu
Jagiello\'nskiego, ul. S. \L ojasiewicza 6, 30-348 Krak\'ow, Poland}
 \email{guillaume.valette@im.uj.edu.pl}

\keywords{Lipschitz map, approximation, o-minimal structure}

\thanks{The research cooperation was funded by the program Excellence Initiative -- Research University at the Jagiellonian University in Krak\'ow, the last two authors were supported by the Narodowe Centrum Nauki (Poland) under grant 2021/43/B/ST1/02359.}

\subjclass[2020]{03C64,14P99,26A16, 41A44,57R12}

\begin{document}

\begin{abstract}
Given an o-minimal structure,  we show that every definable (in this structure) mapping that is Lipschitz with respect to the inner metric can be approximated by
$\cc^1$ mappings that are  Lipschitz with respect to the inner metric with arbitrarily close bounds for the derivative. When the o-minimal structure
 admits $\cc^\om$ cell decomposition, we show that the approximation can be required to be  $\cc^\infty$ and we extend this result to outer Lipschitz mappings. The proof involves the construction of partitions of unity with sharp bounds for the derivative, which can be useful for other approximation problems.
\end{abstract}
\maketitle
\hfill{\it In the  memory of David Trotman}
\section{Introduction}
Approximation theorems play an important role in many areas of analysis, and in the recent years,  the development of o-minimal geometry provided us valuable tools to perform analysis on singular spaces.%  In  \cite{fischer}, A. Fischer  proved that Lipschitz definable mappings can be approximated by $\cc^1$ definable Lipschitz mappings with almost the same Lipschitz constant.

%It is well-known since the works of M. Shiota \cite{shiota} and J. Escribano \cite{escribano} that definable $\cc^1$ mappings can be approximated by smooth mappings with approximation of the derivative. Applications to analysis often require to work intrinsically and demand to have bounds for the first derivative.

In \cite{fischer}, A. Fisher shows that Lipschitz definable mappings can be approximated by continuously differentiable definable mappings with approximation of the Lipschitz constant. We show in this article an analogous result for inner Lipschitz definable mappings (Theorem \ref{B}). This is the class of mappings having bounded first derivative (almost everywhere), which appears naturally when performing analysis on singular varieties.   Definable approximations with bounded derivatives are for instance needed to define weight functions to study solutions of PDEs on singular domains (see for example \cite{kuplik}).
The proof of this theorem relies on the construction of definable $\cc^1$ partitions of unity with sharp bounds of the first derivative (Theorem \ref{A}), which is of its own interest and can be useful to show other density results. It can be seen as an improvement of a result of \cite{kcpv}, where $\Lambda_p^0$ regular partitions of unity were constructed. Somehow, we provide $\Lambda_1^0$-regular partitions of unity but with a constant which is arbitrarily small.

We also derive $\cc^\infty$ approximation results.
All the finiteness properties of o-minimal structures that make them very attractive also make them  very rigid.   
It is  well-known for instance that it is impossible to construct $\cc^\infty$ definable partitions of unity when the structure is polynomially bounded. In such structures, any $\cc^\infty$ function that has a $0$ Taylor series at some point must be $0$ on the entire connected component of its domain containing that point, which makes $\cc^\infty$ approximation problems much more challenging in this framework.
We strengthen Fisher's result by showing that it is indeed possible to approximate definable Lipschitz mappings by $\cc^\infty$ definable Lipschitz mappings (Theorem \ref{main}) in any o-minimal structure admitting $\cc^\infty$ cell decomposition.
 Moreover we give analogous results for inner Lipschitz mappings (Corollary \ref{cor_c_infty}).
% The proofs relies on a construction of a partition of unity (Theorem \ref{A} below) which is of its own interest.
 
We denote by $|.|$  the euclidean norm.  Given   a  set $A\subset\R^n$, we write $d(x,A)$ for the euclidean distance from a point $x\in\R^n$  to  $A$, and $\partial A$ for the topological boundary of  $A$, i.e. $\pa A:=\adh A\setminus A$.  $\bou(x,r)$ stands for the open ball of radius $r$ centered at $x $.
\section{Framework}
We recall a few basic facts and definitions about  o-minimal structures.
We refer to \cite{costeomin,dries_omin} for more.
In the sequel, although \cite{gv_livre} is only devoted to the subanalytic category, we will also sometimes refer to this survey for results whose proofs go over the framework of o-minimal structures.

 \subsection{O-minimal structures.}  \label{sect_ominimal}
  A \textit{structure} (expanding $(\mathbb{R}, + ,.)$) is a family $\mathcal{D} = (\D_n)_{n \in \mathbb{N}}$ such that for each $n$ the following properties hold
\begin{enumerate}
\item [(1)] $\D_n$ is a Boolean algebra of subsets of $\mathbb{R}^n$.
\item[(2)] If $A \in \D_n$ and $B\in \D_k$ then $ A\times B$  belongs to $\D_{n+k}$.
\item[(3)] $\D_n$ contains $\{x \in \mathbb{R}^n: P(x) = 0\}$, where $P \in \mathbb{R}[X_1,\ldots, X_n]$.
\item[(4)] If $A \in \D_n$ then $\pi(A)$ belongs to $\D_{n-1}$, where $\pi: \mathbb{R}^n \to \mathbb{R}^{n-1}$ is the standard projection onto the first $(n-1)$ coordinates.
 \end{enumerate}
 Such a family $\mathcal{D}$ is said to be \textit{o-minimal} if in addition:
 \begin{enumerate}
 \item [(5)] Any set $A\in \D_1$ is a finite union of intervals and points.
 \end{enumerate}
A set belonging to the structure $\mathcal{D}$ is called a {\it definable set} and a map whose graph is in the structure $\mathcal{D}$ is called  a {\it definable map}.

A structure $\mathcal{D}$ is said to be \textit{polynomially bounded} if for each definable function $f: \mathbb{R}\to \mathbb{R}$, there exist a positive number $a$ and  $p \in \mathbb{N}$ such that $|f(x)| < x^p$ for all $x > a$.

Examples of polynomially bounded o-minimal structures are the semi-algebraic sets and the globally subanalytic sets, but also the so-called $x^\lambda$-sets  as well as the structures defined by convenient quasi-analytic Denjoy-Carleman classes of functions \cite{rsw}.

We say that an o-minimal structure {\it admits $\cc^\om$ cell decomposition} if for each definable function $f  $ on an open  subset $U$ of $\R^n$, there is a definable open dense subset of $V$ of $U$ on which $f$ is $\cc^\om$.
All the just above mentioned examples of polynomially bounded o-minimal structures  admit  $\cc^\om$ cell decomposition. 

Throughout this article, $\mathcal{D} = (\D_n)_{n \in \mathbb{N}}$  will stand for a fixed polynomially bounded o-minimal structure (expanding $(\mathbb{R}, + ,.)$). The term ``definable'' will refer to this structure. 

Given  $A\in \D_n$ and $B\in \D_k$, we denote by $\D(A,B)$ the set of  definable  mappings $\xi:A\to B$ and by $\D(A)$ the set of  definable functions $\xi:A\to \R$, whereas  $\spl(A)$ will stand for the set of positive definable continuous functions on  $A$. 

Given $m\in \N\cup \{\infty\}$, we write $\D^m(A)$ (resp. $\D^m(A,B)$) for the elements of $\D(A)$  (resp. $\D(A,B)$) that are $\cc^m$ (a function will be said to be $\cc^m$ on  $A$ if it extends to a $\cc^m$ function on an open neighborhood of $A$ in $\R^n$).

%     \subsection{Definable Efroymson's  topology.}

%     and, given $\ep \in \spl(U)$,$$\U_\ep^1(f):=\{g\in \St^1(U,V):  |f-g|_1<\ep\}.$$
%     The {\it $\cc^1$ Efroymson topology} on $\D^1(U,V)$ is the topology
%     for which a basis of neighborhoods of $f$ is given by the family $(\U_\ep^1(f))_{\ep \in \spl(U)}$.

    Given a mapping $f:A\to B$, we denote by $|f|$ the function that assigns to $x\in A$ the number $|f(x)|$. When $f$ is differentiable,
      we will write $|df|$ for the function defined by $x\mapsto |d_xf|$, where $|d_x f|:=\sup_{|u|=1} |d_xf(u)|$. We then set $$|f|_1:=|f|+|df|.$$
      Recently \cite[Theorem $4.8$]{vv}, it was proved that if  $\D$ admits $\cc^\infty$ cell decomposition and if $M$ is a $\cc^\infty$ definable submanifold of $\R^n$ then
  for each  $f\in \D^1(
  M)$ and $\ep \in \D^+(M)$ there is $g\in \D^\om(M)$ such that on $M$
  $$|f-g|_1<\ep.$$
  This result was proved earlier in the semi-algebraic structure  by M. Shiota \cite{shiota}. The first deep insight in this direction was indeed achieved by G. Efroymson who showed that there is a $\cc^\infty$ semi-algebraic function $g$ such that $|f-g|<\ep$ \cite[Theorem $8.8.4$]{bcr}.

  %Efroymson's Approximation Theorem \cite[Theorem 8.8.4]{bcr} yields that for each $f\in \D^0(A)$ and $\ep \in \D^+(A)$ there is $g\in \D^\om(A)$ in the semi (for each semialgebraic set $A$).We will write $|d_xf|$ for the norm of $d_xf$ (as a linear mapping) derived from the Euclidean norm $|.|$.

%     Given  a definable mapping $\xi:A\to B$, with $A\in \D_n$, $B\in \D_k$, and $\ep \in \spl(A)$ we set: $$\U_\ep^0(\xi)=\{\zeta\in \St^0(A,B):  |\xi-\zeta|<\ep\}.$$
%     The {\it $\cc^0$ Efroymson topology} on $\D^0(A,B)$ is the topology
%     for which a basis of neighborhoods of $f$ is given by the family $(\U_\ep^0(f))_{\ep \in \spl(A)}$.

If $S$ is a definable $\cc^k$ submanifold of $\R^n$ ($k\ge 2$ possibly infinite) then there exist a definable open neighborhood $U_S$ of $S$ and a definable $\cc^{k-1}$
retraction $\pi_S:U_S\to S$ such that for all $x\in U_S$ we have $$d(x,S)=|x-\pi_S(x)|.$$
If $\dim S=n$ then  $U_S=S$ and $\pi_S(x)=x$ for all $x$.  Moreover, for $x\in U_S\setminus S$ we have $$\nabla_xd(\cdot,S)=\frac{x-\pi_S(x)}{|x-\pi_S(x)|},$$ which is always orthogonal to
$T_{\pi_S(x)}S $  (see \cite[Proposition 2.4.1]{gv_livre}).

Given  $\delta \in \D^+(S)$, we then let
 \begin{equation}\label{eq_udelta}
 	U_S^\delta:=\{x\in U_S:d(x,S)<\delta (\pi_S(x))\}.
 \end{equation}
%. The triplet $(U,\pi_A,d(\cdot ,A)^2)$ is called a tubular neighborhood of A

\subsection{Stratifications}\label{sect_stratifications}
We define the {\it angle} between two given vector subspaces $E$ and $F$ of $\R^n$  as:
$$\angle(E,F):=\underset{u\in E,|u|
	\le 1}{\sup} d(u,F).$$

\begin{dfn}\label{dfn_stratifications}
	A {\it $\cc^k$
		stratification of}\index{stratification} a definable set $A\subset \R^n$ is a finite partition of it into
	definable $\cc^k$ submanifolds of $\R^n$, called {\it strata}. A {\it refinement} of a stratification $\sca$ is a stratification $\sca'$ such that every $S\in \sca$ is a union of strata of $\sca'$.
	
	%If $\sca$ is a stratification of a set $A$ we will say that $(A,\sca)$ is a stratified set (by $\sca$).
% 	A stratification is {\bf compatible} with a set  if this set is the union of some strata. \index{compatible!  stratification}
	%We then say that $(X,\Sigma)$ is a {\bf stratified set}\index{stratified set}.A {\bf refinement of a stratification}\index{refinement! of a stratification} $\Sigma$ of $X$ is a stratification of $X$ compatible with every stratum of $\Sigma$.
%	We say that $\Sigma$ {\bf satisfies the frontier condition} if for every $S\in \Sigma$ the set $fr(S)\cap A$ is the union of some elements of $\Sigma$. 
%	

	 Let $S$ and $Y$ be a couple of strata (of some stratification)   and let $z \in S\cap \adh{Y}$.
	We will say that $(Y,S)$ satisfies the {\it $(w)$ condition at $z$} (of Kuo-Verdier) if there exists a constant $C$ such that for $y\in Y$ and $x
	\in S$ in a neighborhood of $z$:
	\begin{equation}\label{eq_w}
		\angle (T_x S,T_y Y) \leq C
		|x-y|.\end{equation}

A stratification $\sca$ is said to be {\it $(w)$-regular} if every couple  of strata $(Y,S)$ of $\sca$ satisfies the $(w)$ condition at every point of $S\cap \adh{Y}$.
We say that a stratification $\sca$ of a set $A$ {\it satisfies the frontier condition} if the closure in $A$ of every  $S\in \sca$ is a union of strata of $\sca$. Equivalently, given $S$ and $Y$ in $\sca$, either $\adh{Y}\cap S= \emptyset$ or $S\subset \adh {Y}$.    When strata are connected (which can always be obtained up to a refinement) and $A$ is locally closed, the $(w)$ condition entails the  frontier condition (see \cite{ver} or \cite[Propositions $2.6.3$ and $2.6.17$]{gv_livre}).  \end{dfn}%In the latter case, if $Y\ne S$ we then write $S\prec Y$.Since strata are definable, the latter case forces $\dim S<\dim Y$.

\begin{dfn}
We say that a couple of manifolds $(Y,S)$  satisfies {\it Whitney's $(b)$ condition} at  $x\in S$,
or is {\it $(b)$-regular} at $x$, if  for all sequences  $x_i\in S$ and $y_i\in Y$ with limit $x$
such that $T_{y_i} Y$ tends to $\tau$ and the lines $\overline{x_i y_i}$ tend to $\lambda$, one has $\lambda\subset \tau$.
When every pair of  strata  of a stratification is $(b)$-regular (at each point)
then we say that the stratification is {\it $(b)$-regular}.
\end{dfn}

It was shown by Ta Le Loi \cite{loi} that every definable set admits a $(w)$-regular stratification  and that Kuo-Verdier $(w)$ condition implies Whitney's $(b)$ condition.
 
 \begin{rem}\label{rem_whitney}
  If $M$ is a definable submanifold of $\R^n$ and $\xo \in \adh M \setminus M$ then $(M,\{\xo\})$ is  $(b)$-regular. Indeed, if Whitney's $(b)$  condition  failed, it easily follows from curve selection lemma that it would fail along a definable arc $\gamma$ in $M$ tending to $\xo$. But since $\gamma$ is definable, the angle between  $(\gamma(t)-\xo)$ and $\gamma'(t)\in T_{\gamma(t)}M$ must tend to $0$, which is a contradiction.
 \end{rem}

 We now recall the notion of horizontally $\cc^1$ stratified mapping due to C. Murolo and D. Trotman \cite{claudio}.
 %\begin{dfn}\label{dfn_stratified_mapping}
  \begin{dfn}
 Let $f:A\to B$ be a definable mapping. We say that $f:(A,\sca)\to (B,\mathcal{T})$  is a {\it stratified mapping}\index{stratified mapping} if $\sca$ and $\mathcal{T}$ are stratifications of $A$ and $B$ respectively such that every stratum  $S\in \sca$ is mapped by $f$ into an element  $T$ of $\mathcal{T}$ and the restricted mapping $f_{|S}:S\to T$ is a $\cc^\infty$ submersion.
%\end{dfn}

 Given a definable mapping $f:A\to B$, it is possible to construct stratifications $\sca$ and $\mathcal{T}$ of $A$ and $B$ respectively, making $f$ into a stratified mapping. These stratifications may satisfy the $(w)$ condition, or more generally any condition for which we can stratify definable sets (see \cite[Proposition 2.6.10]{gv_livre}).  

 A stratified mapping  $f:(A,\sca)
\to (B,\mathcal{T})$  is said to be {\it horizontally
$\cc^1$}  \index{horizontally $\cc^1$} if for any sequence
$(x_l)_{l\in \N}$ in a stratum $Y$ of $\sca$ tending to some
point $x$ in a stratum $S\in \sca$ and for any  sequence $u_l \in
T_{x_l} Y$ tending to a vector $u$ in $T_x S$, we have
$$\lim d_{x_l} f_{|Y}(u_l)=d_x f_{|S} (u).$$
\end{dfn}

\begin{rem}Since we can choose a sequence $u_l$ that tends to $0$, horizontally $\cc^1$ mappings must have locally bounded derivative. Moreover, when the $(w)$ condition holds, the required continuity condition in  the above definition entails that  every stratum $S$ has for each positive number $\mu$ a definable neighborhood $U$ in $A$ on which
\begin{equation}\label{eq_hc1_df}
	|d_{\pi_{S}(x)} f_{|S} | \le |d_x f_{|Y}|+\mu,
\end{equation} 
where $Y$ is the stratum that contains $x$. Conversely,
 given a mapping that has locally bounded derivative, it is not difficult to see that  there is a stratification of $f$ with respect to which $f$ is horizontally $\cc^1$. This stratification is indeed provided by a $(w)$-regular stratification of the restriction of the canonical projection $A\times B\to B$ to the graph of $f$. The corresponding  stratifications of $A$ and $B$ can be required to satisfy the $(w)$ condition (see \cite[Proposition $2.6.13$]{gv_livre} for a construction of regular horizontally $\cc^1$ stratifications).
\end{rem}

\subsection{Rugose vector fields}\label{sect_rugose} Let $\sca$ be a $(w)$-regular stratification of a definable set $A$ and let  $S\in \sca$  as well as $\xo\in S$.  It was established by Verdier \cite{ver} that when the $(w)$ condition holds,  each germ of smooth tangent vector field  on $S$ near $\xo$, say $v:U\cap S\to TS$, has an extension $\tilde v:U\cap A\to \R^n$, tangent to the strata (i.e. $v(x)\in T_xY$ for all $x\in U\cap Y$, $Y\in\sca$), for which there exists a constant $C$ (depending on $x_0$)  such that for all $x\in A$ and $y \in S$ in the vicinity of $\xo$:
 \begin{equation}\label{eq_rugose}
 |\tilde v(x)-\tilde v(y)|\le C |x-y|.\end{equation}
    Vector fields satisfying such an inequality are called {\it rugose} \cite{ver}. %This is a Lipschitz type condition but $x$ is any point of $ A$ close to $\xo$ while $y$ has to be in $S$.
    %More generally a rugose vector field $w$ on a stratified set $(X,\Sigma)$ is a mapping $w:X\to \R^n$ such that $w(x)\in T_x S$, for every $x\in S\in \Sigma$ and such that (\ref{eq_rugose}) for every couple of strata $S\prec Y$ for $y\in S$ and $x\in Y$ close to $y$.
%  Verdier proved that the $(w)$ condition implies that every
% vector field on a stratum $S$ extends to a rugose stratified vector field in a neighborhood
% of $S$???. (more details are given below???)
 This extension property is actually equivalent to the $(w)$ condition, as  established by Brodersen and Trotman \cite{bt}.
    For more details on stratifications and rugose vector fields we refer to the David Trotman's survey \cite{t} and references therein.

 \section{Inner Lipschitz mappings}
   If $x$ and $y$ are two points of the same connected component of a definable set  $A\subset\R^n$, we define
 $$\id_A(x,y):=\inf\{l(\gamma)\,:\, \gamma\in\D^0([0,1], A), \gamma(0)=x, \gamma(1)=y\},$$ where $l(\gamma)$ denotes the length of an arc $\gamma$ (defined as $\sup \sum_{i=0} ^{k-1} |\gamma(t_i)-\gamma(t_{i+1})|$, where the supremum is taken over all the partitions $0=t_0<\dots<t_k=1$).  When $x$ and $y$ do not belong to the same connected component, we put $\id_A(x,y):=+\infty$. We call $\id_A(\cdot,\cdot)$, the {\it inner metric on $A$}.

 The definition of the inner metric restricts itself to definable arcs because we need a bit of regularity in order to evaluate the length of arcs (definable arcs are piecewise $\cc^1$), and, as well-known, connected definable arcs are definably path connected. The set $A$ being possibly singular, we are not sure that there is always a $\cc^1$ arc connecting points. Lemma \ref{lem_distance_loc} will establish that  $\cc^0$ arcs would give the same metric (see (\ref{eq_inner_nondef})).

\begin{dfn}We say that a mapping $f:A\to\R^k$ is {\it inner Lipschitz} if there exists a constant $C$ such that the following inequality holds for any $x,y\in A$
 $$|f(x)-f(y)|\le C\mathring{d}_A(x,y).$$
%  We will say in that case that $f$ is $C$-inner Lipschitz.
The smallest such $C$ is then called {\it the inner Lipschitz constant of $f$}.

Given a  mapping $f:A\to\R^k$ and $\xo\in A$, we set
%the {local inner Lipschitz constant of $f$ at $\xo$}  is defined as
%We say that $f$ is {\it locally inner Lipschitz} at $x_0\in A$  if this point admits a neighborhood on which $f$ induces an inner Lipschitz mapping.  We then set
 $$\mathring{L}_f(x_0):=\inf_{r>0}\{C\,:\, |f(x)-f(y)|\le C\mathring{d}_A(x,y),\, \mbox{\rm for all}\, x,y\in \bou(x_0,r)\cap A\}.$$
It directly follows from the definitions that $f$ is inner  Lipschitz if and only if $\mathring{L}_f$ is a bounded function on $A$, and that in this case the inner Lipschitz constant of $f$ is equal to the supremum of $\mathring{L}_f$. \end{dfn}

We stress the fact that the inner distance is not necessarily definable. The  lemma below however establishes that  $\mathring{L}_f$ is  a definable function.
In this lemma, we consider $d_x f$ although $f$ is not assumed to be differentiable. This is because definable mappings are differentiable on a definable dense subset of their domain.

\begin{lem}\label{lem_der_bornee}
 Let $A\subset\R^n$ be a definable set and $f\in\D^0(A,\R^k)$. For all $\xo\in A$, we have %is inner Lipschitz  if and only if this point  $|d_x f|$ is bounded (almost everywhere) on $A$.
% $$\inf_{r>0}\sup\{ |d_xf|\,:\, x\in reg(f) \cap B(x_0,r)\}<\infty.$$
% In this case
\begin{equation}\label{lilc}
\mathring{L}_f(x_0)=\inf_{r>0}\sup\{ |d_xf|\,:\, x\in \bou(x_0,r)\cap A\}.\end{equation}\end{lem}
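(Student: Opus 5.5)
Write $M(\xo)$ for the right-hand side of (\ref{lilc}). Here $d_xf$ is taken on the definable set of points where $f$ is differentiable in the sense of the stratum: we fix a $\cc^1$ stratification $\sca$ of $A$ such that $f$ is $\cc^1$ on each stratum, and let $d_xf$ be $d_x f_{|S}$ for $x\in S$. The nonempty sets in this discussion have positive dimension whenever they are not isolated points, so a sup over $\bou(\xo,r)\cap A$ may equivalently be taken over a dense subset. I will prove the two inequalities $M(\xo)\le \mathring{L}_f(\xo)$ and $\mathring{L}_f(\xo)\le M(\xo)$ separately.

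\emph{Step 1: $M(\xo)\le \mathring L_f(\xo)$.} Fix $C>\mathring L_f(\xo)$ and $r$ such that $|f(x)-f(y)|\le C\id_A(x,y)$ on $\bou(\xo,r)\cap A$. Take $x\in S\cap\bou(\xo,r)$ with $S\in\sca$, and a unit vector $u\in T_xS$. Let $\gamma$ be a $\cc^1$ arc in $S$ with $\gamma(0)=x$ and $\gamma'(0)=u$. Then $\id_A(x,\gamma(t))\le l(\gamma_{|[0,t]})=t+o(t)$, so
$$|d_xf(u)|=\lim_{t\to 0}|f(\gamma(t))-f(x)|/t\le C.$$
Hence $|d_xf|\le C$, and letting $C\downarrow\mathring L_f(\xo)$ gives the inequality.

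\emph{Step 2: $\mathring L_f(\xo)\le M(\xo)$.} This is the main step. Fix $C>M(\xo)$ and $r$ with $|d_xf|\le C$ on $\bou(\xo,r)\cap A$. I need $|f(x)-f(y)|\le C\id_A(x,y)$ for $x,y\in \bou(\xo,r')\cap A$, with $r'$ small. Arcs realizing $\id_A(x,y)$ up to $\ep$ may leave $\bou(\xo,r)$. However, if $r'<r/3$, any arc of length less than $|x-y|+\ep\le 2r'+\ep$ starting at $x$ stays in $\bou(\xo,r)$. For arcs longer than this, the inequality is trivial only if it is used in the limiting direction; since we take the infimum of lengths, restricting to short arcs loses nothing as long as $\id_A(x,y)$ is small. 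If $\id_A(x,y)\ge r/3$, the bound $|f(x)-f(y)|\le C\id_A$ needs separate care. I handle this by noting that $\mathring L_f$ only requires the inequality for some $r'$, and choosing $r'$ so small that $2\sup_{\bou(\xo,r')}|f-f(\xo)|\le C r/3$, which is possible by continuity of $f$.

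It therefore remains to show that for a definable continuous arc $\gamma$ in $A\cap\bou(\xo,r)$, we have $|f(\gamma(1))-f(\gamma(0))|\le C\,l(\gamma)$. Since $\gamma$ and $f\circ\gamma$ are definable, there is a finite subdivision of $[0,1]$ on whose open intervals $\gamma$ is $\cc^1$, lies in a single stratum $S$, and $f\circ\gamma$ is $\cc^1$. This uses o-minimality: the preimages $\gamma^{-1}(S)$ are finite unions of points and intervals. On each such interval,
$$|(f\circ\gamma)'(t)|=|d_{\gamma(t)}f_{|S}(\gamma'(t))|\le C|\gamma'(t)|.$$
Integrating on compact subintervals and using continuity of $f\circ\gamma$ at the endpoints, then summing, gives the claim, with $\int|\gamma'|=l(\gamma)$.

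The obstacle is the stratum bookkeeping. A naive choice of the points where $f$ is differentiable is only dense, and an arc may lie entirely in the non-differentiability locus, for instance inside a lower-dimensional stratum. Using stratumwise derivatives circumvents this. Compatibility with the paper's convention for $d_xf$ then requires checking that the sup over the dense open set where $f$ is differentiable dominates the stratumwise derivatives on the lower strata. I expect this to follow from a horizontally $\cc^1$ stratification and (\ref{eq_hc1_df}): $|d_{\pi_S(x)}f_{|S}|\le|d_xf_{|Y}|+\mu$ with $\mu$ arbitrary, applied inductively down the strata.
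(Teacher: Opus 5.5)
Your proof is correct and has the same architecture as the paper's. You bound the stratumwise derivatives on $\bou(\xo,r)\cap A$, passing from the dense set to lower strata through a horizontally $\cc^1$ $(w)$-regular stratification and (\ref{eq_hc1_df}). You then integrate that bound along a near-optimal definable arc cut into finitely many pieces, each lying in a single stratum.

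The real difference is how the arc is kept inside $\bou(\xo,r)$.
\begin{itemize}
\item The paper picks $\tilde r$ with $\id_A(a,\xo)<r/4$ for all $a\in\bou(\xo,\tilde r)\cap A$. This depends on $A$ alone and relies on the fact that $\id_A(a,\xo)\to 0$ as $a\to\xo$ (Lemma~\ref{lem_distance_loc} or the literature). Near-optimal arcs between points of $\bou(\xo,\tilde r)$ are then automatically short.
\item You split into cases instead. If $\id_A(x,y)<r/3$, near-optimal arcs have length $<r/3$ and stay in $\bou(\xo,r'+r/3)\subset\bou(\xo,r)$. If $\id_A(x,y)\ge r/3$ (including $+\infty$), continuity of $f$ at $\xo$ gives $|f(x)-f(y)|\le Cr/3\le C\id_A(x,y)$ once $r'$ is small.
\end{itemize}
Your version is more elementary and makes the lemma independent of the local behaviour of the inner metric. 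The only cost is that $r'$ depends on $f$ and $C$, which is harmless for a pointwise statement.

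Two small points of presentation:
\begin{itemize}
\item The sentence about arcs of length less than $|x-y|+\ep$ is beside the point. The relevant length is $\id_A(x,y)+\ep$, and it is your case split that actually does the work.
\item Your opening remark that the supremum may equivalently be taken over a dense subset is not automatic. It is exactly the reconciliation you defer to the end. That step is sound and needs no induction down the strata. If $z\in S$ and $x_i\to z$ in a stratum $Y$ with $z\in\adh Y$, Whitney's $(a)$ condition (implied by $(w)$) lets you approximate any unit $u\in T_zS$ by $u_i\in T_{x_i}Y$. Horizontal $\cc^1$-ness then yields $|d_zf_{|S}(u)|=\lim|d_{x_i}f_{|Y}(u_i)|\le\liminf|d_{x_i}f_{|Y}|$, which is what (\ref{eq_hc1_df}) encodes.
\end{itemize}
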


\begin{proof}
As $|d_xf|$ cannot be greater than the inner Lipschitz constant, the right-hand-side cannot be greater than the left-hand-side. It thus suffices to show the reversed inequality.

Take $(w)$-regular stratifications  $\sca$ and $\mathcal{T}$ of $A$ and $\R^k$ respectively  that make of  $f$ a horizontally $\cc^1$ mapping, and
 assume that $|d_x f|\le L$ on $ \bou(x_0,r)\cap A$ for some $r$.

 Let us show that there exists $\tilde{r}\le r$ such that $f$ is $L$-Lipschitz with respect to the inner metric on  $\bou(x_0,\tilde{r})\cap A$.
	By (\ref{eq_hc1_df}), we see that
	\begin{equation}\label{eq_borne_dfs}
	\sup_{x \in \bou(x_0,r)\cap S} |d_x f_{|S}| \le L,
	\end{equation}
	for every stratum $S\in\sca$ meeting this set. As  $\mathring{d}_A(a,\xo)$ tends to $0$ as $|a-\xo|$ tends to $0$ (see for instance \cite{kp} or \cite[Proposition $3.2.5$]{gv_livre}, this also directly follows from the next lemma), there is
	$ \tilde{r}\le \frac{r}{4}$ such that for all $a$ in
	$\bou(x_0,\tilde{r})\cap A$
	\begin{equation}\label{eq_d_axo}
	\mathring{d}_A(a,\xo)< \frac{r}{4}.
	\end{equation}
	 By definition of the inner metric, given  $a,b\in A$ there is for each $\mu>0$  a $\cc^0$ definable arc $\gamma:[0,1]\to  A$ of length not greater than $\mathring{d}_A(a,b)+\mu$ connecting $a$ and $b$. For $\mu<\frac{r}{4}$ and $a$ as well as $b$ in $\bou(x_0,\tilde{r})\cap A$, we then have
	$$l(\gamma)\le \mathring{d}_A(a,b)+\mu\le\mathring{d}_A(a,\xo)+\mathring{d}_A(b,\xo)+\frac{r}{4} \overset{(\ref{eq_d_axo})}<\frac{3r}{4},$$ which means that such an arc $\gamma$ does not leave $\bou(\xo,r)\cap A$ (since $\gamma(0)\in \bou(\xo,\frac{r}{4})$).
 Let then $t_0=0< t_1<\dots<t_k=1$ be such that  $\gamma$ stays in the same stratum on $(t_i,t_{i+1})$ for all $i<k$.  By (\ref{eq_borne_dfs}) and the Mean Value Theorem,  we have for every $i<k$: $$|f(\gamma(t_i))-f(\gamma(t_{i+1}))|\le L l(\gamma_{|[t_i,t_{i+1}]}), $$
	which shows that $|f(a)-f(b)|$ is not bigger than $L l(\gamma)$. Hence, $|f(a)-f(b)|\le L\mathring{d}_A(a,b)$, as claimed.
\end{proof}

The inner distance was defined by taking the infimum among all the {\it definable} continuous arcs. The proof of Lemma \ref{lem_strate_inner} will however involve
 integration of vector fields, which may generate non definable arcs. We thus need to check that the infimum among all the continuous  arcs is not smaller, for which the following lemma will be needed (see (\ref{eq_dring_dtilde})).
\begin{lem}\label{lem_distance_loc}
 Let $A\subset \R^n$ be a definable set and $\mu>0$. Every $x_0\in A$ has a neighborhood $U$ in $A$ such that each $x\in U$ can be connected to $\xo$ by a $\cc^0$ definable arc  in $A$ of length not greater than $(1+\mu)|x-x_0|. $
%
%  Consequently, for $x\in U$
%  \begin{equation}\label{eq_loc_inner}
% \mathring{d}_U(x,x_0)  \le (1+\mu)|x-x_0|.                                                                                               \end{equation}
\end{lem}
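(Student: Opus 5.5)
We will use the local conic structure of definable sets near $\xo$, combined with the tangency of definable arcs to rays (Remark \ref{rem_whitney}). Take a $(w)$-regular stratification $\sca$ of $A$ with connected strata such that $\{\xo\}$ is a stratum. For each stratum $Y\neq\{\xo\}$ with $\xo\in\adh Y$, the couple $(Y,\{\xo\})$ is $(b)$-regular by Remark \ref{rem_whitney}. The aim is a quantitative consequence of this. Set
$$\theta(r):=\sup\{\angle(\R(y-\xo),T_yY)\,:\,y\in Y,\ |y-\xo|=r,\ Y\in\sca\}.$$
We claim $\theta(r)\to 0$ as $r\to0$. This function is definable, so if it did not tend to $0$, curve selection would give a definable arc $y(t)$ in some $Y$, tending to $\xo$, along which the angle stays bounded below. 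Passing to subsequences, $T_{y(t)}Y\to\tau$ and $\R(y(t)-\xo)\to\lambda$ with $\lambda\not\subset\tau$, which contradicts $(b)$-regularity at $\xo$.

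Next fix $r_0>0$ so small that $\theta(r)<\eta$ for $r<r_0$, where $\eta$ is chosen from $\mu$ below. On each stratum $Y$, consider the radial function $\rho(y)=|y-\xo|$ restricted to $Y$. Its gradient along $Y$ is the orthogonal projection $P_y$ of $u(y):=(y-\xo)/|y-\xo|$ onto $T_yY$, and $|P_y|\ge \sqrt{1-\eta^2}$. Hence $\rho_{|Y}$ has no critical points in $\bou(\xo,r_0)$. We descend along the normalized vector field $w(y)=-P_y/|P_y|^2$, which decreases $\rho$ at unit speed and has norm at most $(1-\eta^2)^{-1/2}$. Integrating from $x$ produces an arc whose length is at most $(1-\eta^2)^{-1/2}|x-\xo|$. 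Choosing $\eta$ with $(1-\eta^2)^{-1/2}\le 1+\mu$ would finish the proof, provided the arc reaches $\xo$ within $A$ and is definable.

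This is the main obstacle. The flow can leave $Y$ toward its frontier before reaching $\xo$, and integral curves need not be definable. To avoid trajectories of vector fields, I would instead select a definable arc directly. Consider the definable set
$$\Gamma=\{(x,y)\in A\times A: y\text{ lies in the stratum of }x,\ |y-\xo|\le|x-\xo|,\ \ldots\},$$
or, more simply, argue by induction on $\dim A$ via the following arc-selection step. Suppose the statement fails. By curve selection, there is a definable arc $x(s)\to\xo$, lying in a single stratum $Y$, such that no definable arc from $x(s)$ to $\xo$ has length at most $(1+\mu)|x(s)-\xo|$. Apply the definable choice / cell decomposition to the family of sets $\{y\in Y:|y-\xo|=r\}$ near $\xo$. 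This yields a definable map $h:(0,r_0)\times(0,r_0)\to Y$, defined on the region where it makes sense, with $h(s,\rho(x(s)))=x(s)$ and $|h(s,r)-\xo|=r$. After shrinking, $h$ is $\cc^1$ and monotone by o-minimality. By the angle estimate, the curves $r\mapsto h(s,r)$ can be chosen with $|\partial_r h|\le 1+\mu$: one selects $h(s,\cdot)$ as the definable curve of steepest descent of $\rho$ in a definable $\cc^1$ trivialization of $Y\cap\bou(\xo,r_0)$ over $\rho$, which exists by Hardt triviality. The curve $r\mapsto h(s,r)$ for fixed $s$ is then a definable arc from $x(s)$ to $\xo$ of length at most $(1+\mu)|x(s)-\xo|$, which is the desired contradiction.

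The delicate point is to guarantee the derivative bound $|\partial_r h|\le 1+\mu$ inside a definable trivialization. I would first try a definable $\cc^1$ trivialization of $\rho$ on $Y$ whose fibers are orthogonal to $\nabla\rho_{|Y}$ to first order; this uses $\theta\to0$. Failing that, I would use a piecewise construction: on each short radial interval $[r/2,r]$, connect to the sphere of radius $r/2$ along an arc whose length is $(1+\mu)r/2$, and concatenate these geometrically. Definability of the final arc would then come from choosing the pieces uniformly via definable choice.
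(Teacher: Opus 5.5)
Your proposal has a genuine gap. The step that carries all the content of the lemma is never carried out: turning the infinitesimal estimate $\theta(r)\to0$ into an actual \emph{definable} arc in $A$ from $x$ to $\xo$ that stays almost radial all the way down. What you write is a list of things you ``would try'', and none of them works as stated.

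\begin{itemize}
\item The descent flow of $\rho_{|Y}$ really does leave $Y$. Take $A=\{(a,b): 0\le b\le a^2\}$, $\xo=0$ and $Y=\{a>0,\ 0<b<a^2\}$. Your field $w$ is the inward radial unit field, and the segment from $(a,b)$ towards $0$ exits $Y$ through $\{b=a^2\}$ at $t=b/a^2$.
\item Repairing this with a rugose stratified field, as in Lemma \ref{lem_strate_inner}, would only give a non-definable $\cc^0$ arc. Converting it into a definable one is precisely (\ref{eq_inner_nondef}), which the paper deduces \emph{from} the present lemma, so this route is circular.
\item The contradiction set-up is not justified. The set of points admitting no definable arc of length $\le (1+\mu)|x-\xo|$ is defined by quantifying over all definable arcs, and these form no definable family. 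So this set is not known to be definable, and curve selection gives you nothing beyond a sequence.
\item Definable choice on the slices $Y\cap\{|y-\xo|=r\}$ gives no continuity of $r\mapsto h(s,r)$, since the slices may be disconnected. It certainly does not give $|\partial_r h|\le 1+\mu$.
\item Hardt triviality only yields a definable homeomorphism $Y\cap \bou(\xo,r_1)\cong F\times(0,r_1)$ over $\rho$. It gives no control on the directions of the curves $r\mapsto\Phi(p,r)$. Demanding that they be almost radial is a metric triviality statement, which is essentially the lemma itself. Your ``curve of steepest descent'' is again a gradient trajectory.
\item Joining the sphere of radius $r$ to that of radius $r/2$ by an arc of length $(1+\mu)r/2$ is, by homogeneity, the same problem. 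An infinite concatenation of such pieces also has no reason to be definable.
\end{itemize}

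What is missing is a way to build the arcs explicitly, so that definability and metric control come for free; this is where Lipschitz cells enter. The paper notes that the statement is stable under finite unions. By the Lipschitz cell decomposition, after an orthonormal change of coordinates, one may therefore assume $A=E\cup\{0\}$ with $\xo=0$ and $E$ a Lipschitz cell. The paper then argues by induction on $n$. It constructs a definable $h:U\times[0,1]\to U$ with $h(x,0)=0$, $h(x,1)=x$, and the angle between $\partial_t h(x,t)$ and $h(x,t)$ tending to $0$ uniformly in $t$ as $x\to 0$. This bounds the length of $t\mapsto h(x,t)$ by $(1+\mu)|x|$ near $0$.

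\begin{itemize}
\item If $E$ is the graph of a Lipschitz $\cc^1$ function $\xi$, the homotopy obtained by induction on $\pi(E)$ is lifted as $(h,\xi\circ h)$. By Remark \ref{rem_whitney}, the normalized position vector is close to the tangent space of $E$. Since $\xi$ is Lipschitz, $\pi$ is bi-Lipschitz on these tangent spaces with uniform constants. Hence the near-parallelism of position and velocity downstairs lifts to the graph.
\item For a band between $\xi_1<\xi_2$, one lifts using the fixed convex combination $s\xi_1+(1-s)\xi_2$ and interpolates the two graph estimates.
\end{itemize}

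The resulting arcs are definable and stay in $E$ by construction, with no flow, no triviality theorem and no contradiction argument. Your $(b)$-regularity estimate is the right local input, and it is exactly what the paper uses. Without the Lipschitz cell structure, however, you have no mechanism to integrate it into a definable arc.
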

\begin{proof}
 We will assume $\xo=0$ and will argue by induction on $n$, the case $n=1$ being obvious.  Note first that, if $A_1,\dots, A_k$ are definable sets for which the lemma holds, then the lemma holds for their union. In particular, since  $A$ can be covered by finitely many sets that are Lipschitz cells after a possible orthonormal change of coordinates that we may identify with the identity (see for instance \cite{kp} or \cite[Proposition $3.2.5$]{gv_livre}), it is enough to show the result for $A$ of type $E\cup \{0\}$, where $E$ is a Lipschitz cell  satisfying $0\in \adh E$.
 %pb x\in adh A?

It suffices to construct a definable  mapping $h:U \times [0,1] \to U$, with $U$ neighborhood of $0$ in $A$, such that
\begin{enumerate}
 \item $h(x,0)=0$  and  $h(x,1)=x$
 \item $h(x,t)$ is $\cc^1$ with respect to $t$ and  the angle between the vectors $\frac{\pa h}{\pa t}(x,t)\ne 0$ and $h(x,t)$ tends to zero as $x\to 0$ (uniformly in $t\in (0,1]$).
\end{enumerate}

  %Take a Whitney stratification $\sca$ of $\adh E$ of which $E$ is a union of strata. all the $\pi(S)\cup 0_{\R^{n-1}}$,  $S\in \sca$,
 We denote by $h$ the mapping obtained by applying induction hypothesis to $\pi(A)$,
   where $\pi:\R^n \to \R^{n-1}$ is the projection omitting the last  coordinate.
We start with the case where $E$ is the graph of a $\cc^1$ Lipschitz function $\xi:\pi(E)\to \R$. In this case, we claim that
 $$\tilde h(x,t):=(h(x,t), \xi(h(x,t)))$$
has the required properties. To see this,
%take a sequence $(x_i, t_i)$ with  $x_i$ tending to zero and $t_i \in (0,1]$.
  let for simplicity $u(x,t)$ and $v(x,t)$ respectively denote the vectors $\tilde{h}(x,t)$ and $ \frac{\pa \tilde h}{\pa t}(x,t)$ divided by the norms of their respective projections onto $\R^{n-1}$.  Denote by $w(x,t)$ the vector realizing  the distance from $u(x,t)$ to $T_{\tilde{h}(x,t)}E$, $t>0$. Since $(E,0)$ is Whitney $(b)$-regular (see Remark \ref{rem_whitney}), $(u-w)(x,t)$ tends to zero as $x\to 0$.

 By induction $\pi(u)-\pi(v)$ tends to zero (as $x\to 0$), from which we deduce that so does $\pi(w)-\pi(v)$.  As $\xi$ is Lipschitz, the restriction of $\pi$ to  $T_{\tilde{h}(x,t)}E$ is a bi-Lipschitz mapping onto its image, with constants bounded independently of $x,t$.  Hence, $w-v$ goes to zero as $x\to 0$, and therefore so does $u-v$, as needed.

 We now have to address the case where $E$ is a band $\{x=(x', x_n)\in \pi(E)\times \R: \xi_1(x') <x_n<\xi_2(x')\}$, with $\xi_1<\xi_2$ Lipschitz $\cc^1$ functions on $\pi(E)$. %We proceed in the same way on both graphs,
 %taking Whitney stratifications $\sca_1$ and $\sca_2$ of the graphs of $\xi_1$ and $\xi_2$.  We then , for $B$ element of a common refinement of the partitions $\{\pi(S):S\in \sca_1\cup \sca_2\}$
%   applying the induction hypothesis to all the $\pi(A)$. This provides a mapping
In this case, we lift the path $h(x,t)$  as follows: given an element $x\in E$ that can be written
 \begin{equation}\label{eq_s}s\xi_1(\pi(x))+(1-s)\xi_2(\pi(x)), \;\; s\in (0,1), \end{equation}
  we set
  $$\tilde h(x,t):=(h(\pi(x),t), s\xi_1(h(\pi(x),t))+(1-s)\xi_2(h(\pi(x),t))).$$
To check that it has the required property, set for simplicity for $x\in \pi(E)$ and $i=1,2$
$$u_i(x,t):=\frac{\tilde{h}(x,\xi_i(x),t)}{|h(x,t)|}\et v_i(x,t):=\frac{\frac{\pa \tilde h }{\pa t} (x,\xi_i(x),t)}{|\frac{\pa  h}{\pa t}(x,t)|} .$$
% (resp. $v_i(x,t)$), $i=1,2$, the vector tangent to the graph of $\xi_i$ at $(h(x,t),\xi_i(h(x,t)))$ that projects onto  (resp. $\frac{\frac{\pa  h}{\pa t}(h(x,t)}{|\frac{\pa  h}{\pa t}(h(x,t)|}$).
% $$\frac{1}{|h(x,t)|}(h(x,t), \xi_i(h(x,t))
% \et \frac{1}{|\frac{\pa  h}{\pa t}(h(x,t))|} (\frac{\pa  h}{\pa t}(h(x,t)), d_{h(x,t)}\xi_i (\frac{\pa  h}{\pa t}(x,t)) ) .$$
The vectors   $u_i$ and $v_i$ project onto unit vectors and, by the above, the angle between these two vectors tends to $0$. Since the $\xi_i$'s are Lipschitz and the $v_i$'s are tangent to the graphs, this implies that $(u_i-v_i)$ tends to zero, $i=1,2$, which means that the difference between
$$w(x,t,s):=su_1(x,t)+(1-s)u_2(x,t) \et w'(x,t,s):=sv_1(x,t)+(1-s)v_2(x,t)$$ tends to zero as $x\to 0$ uniformly in $s\in (0,1)$ and $t$. The vector $w(\pi(x),t,s)$ (resp. $w'(\pi(x),t,s)$) being colinear to $\tilde h(x,t)$ (resp. to  $\frac{\pa \tilde h}{\pa t}(x,t)$) if $x$ and $s$ are as in (\ref{eq_s}), this yields the needed fact.
\end{proof}

Thanks to this lemma, we can show that the inner metric could be defined with non necessarily definable arcs, i.e., that given a definable set $A$ as well as $x$ and $y$ in $A$
\begin{equation}\label{eq_inner_nondef}
 \mathring{d}_A(x,y)=\inf \{l(\gamma): \gamma\in \cc^0([0,1], A) \mbox{  joining $x$ and $y$}\}.
\end{equation}
 If we denote by $\tilde{d}_A(x,y)$ the infimum that sits on the right-hand-side of the above equality then of course $\tilde{d}_A(x,y)\le\mathring{d}_A(x,y)$. To prove the reversed inequality, we will show that                                                                                                                                                                                                                                                                                                                                                                                                                                                                                                                            for each $\mu>0$
\begin{equation}\label{eq_dring_dtilde}\mathring{d}_A(x,y)\le (1+\mu)\tilde{d}_A(x,y).\end{equation}
 Take for this purpose an arc $\gamma\in \cc^0([0,1], A) $ joining two points $x$ and $y$ as well as $\mu>0$. Apply the just above lemma to the point $\gamma(t)$ for each $t \in [0,1]$ to get a neighborhood $U_t$ of $\gamma(t)$.
There is a finite partition $t_0=0<\dots<t_k=1$ such that $\gamma(t_i)$ and $\gamma(t_{i+1})$ are both in $U_{t_i}$ or in $U_{t_{i+1}}$, and thus can be joint by a $\cc^0$ definable arc  $\alpha_i:[t_i, t_{i+1}] \to A$ satisfying
\begin{equation}\label{eq_alpha_i}l(\alpha_i) \le (1+\mu)|\gamma(t_i)-\gamma(t_{i+1})|.\end{equation}
The $\alpha_i$ glue together into a continuous definable path $\alpha$ joining $x$ and $y$ and we have
$$\mathring{d}_A(x,y)\le  l(\alpha)=\sum_{i=0} ^{k-1} l(\alpha_i) \overset{(\ref{eq_alpha_i})}\le  (1+\mu)\sum_{i=0} ^{k-1}|\gamma(t_i)-\gamma(t_{i+1})| \le  (1+\mu)l(\gamma) ,$$
yielding (\ref{eq_dring_dtilde}), which establishes in turn (\ref{eq_inner_nondef}).

\begin{lem}\label{lem_angles}
For all vector subspaces $E,F$, and $T$ of $\R^n$ satisfying $ T \cap F^\perp=\{0\}$ we have
\begin{equation}\label{eq_angle_cap}
\angle (E\cap T^\perp, F\cap T^\perp)\le 2\;\frac{\angle(E,F)}{\inf_{v\in T, |v|=1}d(v,F^\perp) } .
\end{equation}

\end{lem}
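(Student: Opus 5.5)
The plan is a direct linear-algebra argument: take a vector of $E\cap T^\perp$ of norm at most one, move it to the nearest point of $F$, and then correct that point inside $F$ so that it becomes orthogonal to $T$, while controlling the size of the correction.

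Set $a:=\angle(E,F)$ and $c:=\inf_{v\in T,|v|=1} d(v,F^\perp)$. If $T=\{0\}$, then $T^\perp=\R^n$ and $c=+\infty$, and I will treat this degenerate case separately. Otherwise $c>0$, because the unit sphere of $T$ is compact and $T\cap F^\perp=\{0\}$. Also $c\le 1$, since $d(v,F^\perp)\le|v|$.

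\textbf{Step 1 (reformulate $c$).} Write $\pi_F$ and $\pi_T$ for the orthogonal projections onto $F$ and $T$. For $v\in T$ we have $d(v,F^\perp)=|\pi_F(v)|$, so the linear map $A:=\pi_F|_T:T\to F$ satisfies $|Av|\ge c|v|$. Its adjoint is $A^*=\pi_T|_F:F\to T$.

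By the standard duality between injectivity with a lower bound and surjectivity with a bounded right inverse, for every $t\in T$ there is $g\in F$ with $\pi_T(g)=t$ and $|g|\le |t|/c$. Concretely, I take $g=A(A^*A)^{-1}t$. Since $A^*A\ge c^2$ on $T$, this gives
$$|g|^2=\langle (A^*A)^{-1}t,t\rangle\le |t|^2/c^2 .$$

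\textbf{Step 2 (projection onto $F$).} Let $u\in E\cap T^\perp$ with $|u|\le 1$, and put $f:=\pi_F(u)$. By definition of $\angle(E,F)$ we get $|u-f|=d(u,F)\le a$. Since $u\perp T$, this yields
$$|\pi_T(f)|=|\pi_T(f-u)|\le a .$$

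\textbf{Step 3 (correction).} Apply Step 1 to $t:=\pi_T(f)$ to obtain $g\in F$ with $\pi_T(g)=\pi_T(f)$ and $|g|\le a/c$. Set $w:=f-g$. Then $w\in F$ and $\pi_T(w)=0$, so $w\in F\cap T^\perp$. Hence
$$d(u,F\cap T^\perp)\le |u-w|\le |u-f|+|g|\le a+\frac{a}{c}\le \frac{2a}{c},$$
where the last inequality uses $c\le 1$. Taking the supremum over $u$ gives (\ref{eq_angle_cap}).

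The only step needing care is Step 1, namely producing a preimage under $\pi_T|_F$ with norm bounded by $1/c$ from the hypothesis $T\cap F^\perp=\{0\}$. The explicit formula $A(A^*A)^{-1}$ settles it. Everything else consists of triangle inequalities.
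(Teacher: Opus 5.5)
Your proof is correct and is essentially the paper's argument. Your $w=f-g$ is exactly $\pi_{F\cap T^\perp}(u)$, because $A(A^*A)^{-1}A^*$ is the orthogonal projection of $F$ onto $\pi_F(T)$, which is the orthogonal complement of $F\cap T^\perp$ in $F$. Both proofs then bound $|u-w|$ by $a+a/c\le 2a/c$, using the same fact that $u\perp T$. The only difference is how the correction term is controlled: the paper picks a single unit vector $v\in T$ with $\pi_F(v)$ parallel to $\pi_F(u)-\pi_{F\cap T^\perp}(u)$, whereas you use the norm of the right inverse $A(A^*A)^{-1}$.

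One loose end: you say you will treat the case $T=\{0\}$ separately but never do. With your convention $c=+\infty$ the right-hand side vanishes, so the inequality is false whenever $E\not\subset F$. That case must therefore be read with denominator $1$, where it is trivial; this is also what the paper tacitly does.
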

\begin{proof}We denote by $\pi_V$ the orthogonal projection onto a space $V$. Take
a unit vector $u$ in $E\cap T^\perp$ and let us show that $d(u, F\cap T^\perp)$ is not greater than the right-hand-side of (\ref{eq_angle_cap}).

If $\pi_F(u)=\pi_{F\cap T^\perp}(u)$ then $d(u, F\cap T^\perp)=d(u, F)\le \angle(E,F)$ and the desired fact is clear. Otherwise, as the vector $\pi_F(u)-\pi_{F\cap T^\perp}(u)$ is the projection of $\pi_F(u)$ onto the orthogonal complement of $F\cap T^\perp$ in $F$, which is nothing but $\pi_F(T)$, we have,
\begin{equation}\label{eq_ufperp}
|\pi_F(u)-\pi_{F\cap T^\perp}(u)|=\langle \frac{\pi_F(u)-\pi_{F\cap T^\perp}(u)}{|\pi_F(u)-\pi_{F\cap T^\perp}(u)|},\pi_F(u)\rangle ,\end{equation}
and there is a unit vector $v$ in $T$ such that
\begin{equation}\label{eq_uv}
\frac{\pi_F(v)}{|\pi_F(v)|}= \frac{\pi_F(u)-\pi_{F\cap T^\perp}(u)}{|\pi_F(u)-\pi_{F\cap T^\perp}(u)|}.\end{equation}

Write now
\begin{eqnarray}\label{eq_u_moins_piqu}
|u-\pi_{F\cap T^\perp}(u)| &\le& |u-\pi_{F}(u)| +|\pi_F(u)-\pi_{F\cap T^\perp}(u)|\nonumber\\
&\le & \angle(E,F)+ |\langle\frac{\pi_F(v)}{|\pi_F(v)|},\pi_{F}(u)\rangle|,
\end{eqnarray}
by the definition of $\angle(E,F)$ as well as (\ref{eq_ufperp}) and (\ref{eq_uv}).
Observe now that
 $$ |\langle\frac{\pi_F(v)}{|\pi_F(v)|},\pi_{F}(u)\rangle|=|\langle\frac{ v}{|\pi_F(v)|},\pi_{F}(u)\rangle|=\frac{ |\langle v,u-\pi_{F}(u)\rangle |}{|\pi_F(v)|} \le \frac{\angle(E,F)}{d(v,F^\perp)}, $$
 because $|u-\pi_{F}(u)|\le \angle(E,F)$ and $|\pi_F(v)|=d(v,F^\perp)$. Plugging the just above inequality into (\ref{eq_u_moins_piqu})  yields the desired fact.
% stands for a unit vector of $Q$ normal to $Q\cap v^\perp$.
\end{proof}

\begin{lem}\label{lem_strate_inner}
Let $\sca$ be a $(w)$-regular stratification of a locally closed definable set $A\subset\R^n$. 	For every $S\in\sca$ and every $\mu>0$  there is $\delta \in \D^+(S)$ such that for all $x\in U^\delta_S\cap A$ \begin{equation}\label{eq_lem_strate_inner}
	\dri_A(x,\pi_S(x))\le (1+\mu)d(x,S). \end{equation}

	\end{lem}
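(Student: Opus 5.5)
The plan is to produce, for $x$ close to $S$, a (not necessarily definable) continuous arc in $A$ from $x$ to $\pi_S(x)$ whose length is at most $(1+\mu)d(x,S)$, and then conclude by (\ref{eq_inner_nondef}). Set $\rho:=d(\cdot,S)$ and, for $z\in S$, let $N_z:=z+(T_zS)^\perp$ be the fiber of $\pi_S$ through $z$. The arc will be a trajectory of a gradient-like flow of $\rho$ that runs inside the fiber $N_{\pi_S(x)}$ and inside the strata, so that $\pi_S$ stays constant along it. Its speed will be chosen so that $\rho$ decreases at exactly unit rate.

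\textbf{Key estimate.} For a stratum $Y$ with $S\subset \adh Y$, $y\in Y\cap U_S$ and $z=\pi_S(y)$, I would show that the unit vector $\nu(y):=\frac{y-z}{|y-z|}$ is almost tangent to $Y\cap N_z$ near $S$:
\[
d\bigl(\nu(y),\,T_yY\cap (T_zS)^\perp\bigr)\to 0 \quad\text{as } d(y,S)\to 0 .
\]
Two inputs give this.
\begin{itemize}
\item Whitney's $(b)$ condition, which follows from $(w)$ by Loi's theorem, gives $d(\nu(y),T_yY)\to 0$.
\item The $(w)$ condition gives $\angle(T_zS,T_yY)\le C|y-z|$. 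Hence every unit $v\in T_zS$ satisfies $d(v,(T_yY)^\perp)\ge 1-C|y-z|$, which is close to $1$.
\end{itemize}
I then apply Lemma \ref{lem_angles} with $T=T_zS$ and $F=T_yY$, and with $E$ the space spanned by $\nu(y)$ and $T_zS$. Since $\nu(y)\in (T_zS)^\perp$, the space $E\cap T^\perp$ is the line through $\nu(y)$, and $\angle(E,F)$ is small by the two inputs above. This gives the displayed estimate. The same argument shows that $T_yY$ is transverse to $N_z$ at such points, so $Y\cap N_z$ is a submanifold near $y$ with tangent space $T_yY\cap (T_zS)^\perp$.

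\textbf{Choice of $\delta$.} The set of points $y\in Y$ at which the angle in the key estimate exceeds a given $\varepsilon$ is definable and does not accumulate at $S$. Since there are finitely many strata, this yields a definable $\delta\in\D^+(S)$ with the following property: on $U_S^\delta\cap A$ the gradient $\nabla(\rho_{|Y\cap N_z})$ has norm at least $1-\varepsilon$. I would also shrink $\delta$ so that $\adh{U_S^\delta}\cap\adh A$ stays inside $A$ near $S$, using local closedness. Finally, $\varepsilon$ is chosen so that $(1-\varepsilon)^{-1}\le 1+\mu$.

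\textbf{Flow and concatenation.} On $Y\cap N_z\cap U_S^\delta$ I consider the vector field $w:=-\nabla(\rho_{|Y\cap N_z})/|\nabla(\rho_{|Y\cap N_z})|^2$. Along its trajectories $\rho$ decreases with unit speed, and the speed is at most $1+\mu$. Starting from $x$, the trajectory stays in $N_{\pi_S(x)}\cap U_S^\delta$, because $\rho$ decreases and $\pi_S$ is constant on it. It is defined up to a time $t_1\le\rho(x)$, and it has finite length, so it has a limit point $x_1\in \adh Y\cap A$. Either $x_1\in S$, in which case $x_1=\pi_S(x)$ because $x_1\in N_{\pi_S(x)}$, or $x_1$ lies in a stratum $Y'$ with $\dim Y'<\dim Y$. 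In the second case I restart the flow in $Y'$ from $x_1$. Because dimensions drop, finitely many pieces reach $\pi_S(x)$. The total length is at most $(1+\mu)\rho(x)$, since $\rho$ decreases at unit rate throughout and the speed never exceeds $1+\mu$.

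\textbf{Main obstacle.} I expect the main difficulty to be the key estimate: controlling the angle between $\nu(y)$ and $T_yY\cap T_zS^\perp$ uniformly, which is exactly what Lemma \ref{lem_angles} combined with $(w)$ and $(b)$ should deliver. A secondary point is that the limit $x_1$ of a trajectory stays in $A$ and in the same fiber; this uses local closedness and the continuity of $\pi_S$.
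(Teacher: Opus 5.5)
Your argument is correct, but it differs from the paper's at the decisive step. On each stratum your field $w=-P\nu/|P\nu|^2$ (with $P$ the orthogonal projection onto $T_yY\cap (T_{\pi_S(y)}S)^\perp$) is, up to sign, exactly the field of the base case of the paper's construction. Your key estimate is the same combination of $(b)$ and Lemma \ref{lem_angles}.

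The paper, however, goes on to build one \emph{rugose} stratified field $\eta$ on $U^\delta_S\cap A\setminus S$. By induction on the dimension of strata, it takes Verdier's rugose extension of the field already built near lower strata and projects it onto $T_xY\cap (T_{\pi_S(x)}S)^\perp$, checking rugosity of the projection with $(w)$ and Lemma \ref{lem_angles}. It then glues this to the base-case field by a partition of unity. Rugosity and Gr\"onwall keep the integral curve of $-\eta$ in the stratum of $x$ until it reaches $\pi_S(x)$.

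You instead allow the curve to fall into lower strata and restart there, with termination coming from the drop in dimension. Your route is more elementary: it needs no rugose extensions, gluing or Gr\"onwall, and uses only Whitney's $(a)$ and $(b)$, so it also covers $(b)$-regular stratifications. The paper's route yields a single stratum-preserving flow, which is more than the lemma needs.

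A few points should be written out.
\begin{itemize}
\item Define $w$ on $Y\cap D_z$ with $D_z:=\pi_S^{-1}(z)\cap U^\delta_S$, which for small $\delta$ is the open ball of radius $\delta(z)$ about $z$ in $N_z$. The set $N_z\cap U^\delta_S$ may contain points where $\pi_S\neq z$, and $S\cap N_z$ may contain points other than $z$. Then $|x_1-z|=\rho(x)-t_1<\delta(z)$ gives $x_1\in D_z$, hence $x_1=z$ if $x_1\in S$.
\item State that $x_1\notin Y$ by maximality of the trajectory.
\item Justify $\dim Y'<\dim Y$: condition $(a)$ gives $T_{x_1}Y'\subset\lim T_yY$, and $(b)$, applied to $y\in Y$ and its closest point in $Y'$, excludes equal dimensions.
\end{itemize}
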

\begin{proof} Let $\mu>0$ and $S\in \sca$.
We first construct a function $\delta\in \D^+(S)$ and a rugose vector field	$\eta:U^{\delta}_S\cap A \setminus S\to \R^n$ tangent to the strata (i.e. $\eta(x)\in T_x Y$ for all $x\in Y \cap U^\delta_S $, for all $Y\in \sca$ different from $S$) such that for all $x\in U^{\delta}_S\cap A \setminus S$ the following properties hold:
\begin{enumerate}
 \item\label{item_skal}  $\;\;\langle\nabla_x d(\cdot,S),\eta(x)\rangle=1.$ \item\label{item_norme}  $\;\;|\eta(x)|\le 1+\mu  $.
 \item\label{item_dpi} $\;\;d_x\pi_S(\eta (x))=0$.
\end{enumerate}
Let $\sca_k$ denote the set of all the strata of dimension less than or equal to $k$.  Let $l$ be the minimal integer such that $U^\delta_S\setminus S$ meets an $l$-dimensional stratum for arbitrarily small $\delta$.  We will define $\eta$ on the elements of $\sca_k$, by induction on $k\ge l$.

For $k=l$, we set for $x\in  U_S^\delta\cap Y, \,Y\in \sca_l$ different from $S$,
$$\eta(x):=P_x(\nabla_x d(\cdot,S)),$$
where $P_x$ is the orthogonal projection onto $T_{\pi_S(x)}S^\perp\cap T_x Y$.
Recall that $\nabla_x d(\cdot,S)=\frac{x-\pi_S(x)}{|x-\pi_S(x)|}$ and  $\ker d_x\pi_S=T_{\pi_S(x)} S^\perp$.    Since the $(w)$ condition implies  Whitney's $(b)$ condition, the angle between $ x-\pi_S(x)$ and $T_x Y$ tends to zero as $x$ draws near $S$, which by (\ref{eq_angle_cap}) (applied with $E=\nabla_x d(\cdot,S)\in T_{\pi_S(x)}S^\perp$, $F=T_xY$, $T=T_{\pi_S(x)}S$), means that $|P_x(\nabla_x d(\cdot,S))-\nabla_x d(\cdot,S)|$ is a definable function that tends to zero as $x$ tends to $S$. By Definable Choice \cite[Theorem 3.1]{costeomin}, we thus see that there is $\delta\in \D^+(S)$ such that on $U^\delta_S\cap Y$
\begin{equation}\label{eq_etarho} |P_x(\nabla_x d(\cdot,S))-\nabla_x d(\cdot,S)| <\mu .\end{equation}
which yields (\ref{item_norme}). Condition (\ref{item_dpi}) holds by construction. If we divide $\eta$ by $\langle\nabla_x d(\cdot,S),\eta(x)\rangle$, we get (\ref{item_skal}). Of course,  (\ref{item_norme}) is then affected by this division but since
$$|\langle\nabla_x d(\cdot,S),\eta(x)\rangle|\overset{(\ref{eq_etarho})}\ge 1-\mu  ,$$
we see that the right-hand-side of  (\ref{item_norme}) is only multiplied by a constant close to $1$,  and hence can be made  arbitrarily close to $1$ by choosing $\mu$ small enough. This completes the case $k=l$.

Take $k>l$ and
let $\eta$ be the vector field given by the induction hypothesis. Denote by  $\tilde{\eta}:U^\delta_S \cap A\setminus S \to \R^n$  its extension to a rugose vector field tangent to the strata and put
$$ \eta'(x):=\frac{P_x(\tilde{\eta}(x))}{\langle\nabla_x d(\cdot,S),P_x(\tilde{\eta}(x))\rangle},$$with $P_x$ as above.
We first check that $\eta'$ is rugose at every  $\xo \in Y\in \sca_{k-1}$, which reduces to check that so is $P_x(\tilde{\eta}(x))$. Fix $ Y\in \sca_{k-1}$ and $\xo \in Y$. Note first that  since $S$ is smooth $\angle (T_xS^\perp, T_{x'}S^\perp)$ is a locally Lipschitz function on $S$. Because $\inf_{|v|=1, v\in T_y Y^\perp} d(v, T_{\pi_S(y)}S)$ tends to $1$ as $y$ tends to $S$ (due to the $(w)$ condition), by
(\ref{eq_angle_cap}) (applied with $E=T_{\pi_S(y)}S^\perp$, $F=T_{\pi_S(x)}S^\perp$, $T=T_y Y^\perp$) we derive near $S$
\begin{equation}\label{eq_rugys}
 \angle (T_y Y\cap T_{\pi_S(y)}S^\perp,T_y Y\cap T_{\pi_S(x)}S^\perp) \le 4\angle ( T_{\pi_S(y)}S^\perp, T_{\pi_S(x)}S^\perp) \le C |y-x|,
\end{equation}
for some constant $C$. Moreover, (\ref{eq_angle_cap}) also entails that
if $Y'$ is a $k$-dimensional stratum such that $\xo\in \adh {Y'}\setminus Y'$ then for every $y\in Y$ and $x\in Y'$ close to $\xo$
\begin{equation}\label{eq_rugyy'}
 \angle (T_y Y\cap T_{\pi_S(x)}S^\perp,T_x Y'\cap T_{\pi_S(x)}S^\perp)\le 4\angle (T_y Y,T_x Y') \overset{(\ref{eq_w})}\le C|y-x|,
\end{equation}
for some possibly bigger constant $C$. As a matter of fact, we can write, using the triangle inequality for angles,
\begin{eqnarray*}
|(P_x-Id)_{|T_y Y\cap T_{\pi_S(y)}S^\perp} |= \angle (T_y Y\cap T_{\pi_S(y)}S^\perp,T_x Y'\cap T_{\pi_S(x)}S^\perp)  \end{eqnarray*}
\begin{eqnarray*}
&\le & \angle (T_y Y\cap T_{\pi_S(y)}S^\perp,T_y Y\cap T_{\pi_S(x)}S^\perp)+ \angle (T_y Y\cap T_{\pi_S(x)}S^\perp,T_x Y'\cap T_{\pi_S(x)}S^\perp)\\
&\le& 2C|y-x| \qquad \mbox{(by (\ref{eq_rugys})) and (\ref{eq_rugyy'})).}
\end{eqnarray*}
We deduce, thanks to the rugosity of $\tilde{\eta}$ and the fact that $\tilde{\eta}(y)\in
T_y Y\cap T_{\pi_S(y)}S^\perp$
$$|\tilde{\eta}(y)-P_x(\tilde{\eta}(x))|\le  |P_x(\tilde\eta(y))-\tilde{\eta}(y)|+|P_x(\tilde\eta(y))-P_x(\tilde\eta(x))|\le C|y-x| ,$$
for some constant $C$, yielding the rugosity of $\eta'$.

This vector field thus has the required properties on a small neighborhood $W$ of the strata of dimension smaller than $k$ (up to a slight change of $\mu$, (\ref{item_norme}), that holds for $\eta$ by the induction hypothesis, continues to hold for $\eta'$ on a small neighborhood of each element of $\sca_{l-1}$ by continuity).
%In order to glue $\eta'_{|W}$ with another vector field that has the required properties outside a neighborhood of these strata, 
Observe that if $Y_1,\dots, Y_\nu$ are the strata of dimension $k$  met by $U^\delta _S$ then
   the construction that we made in the case $k=l$ applies to every  $Y_i$, giving a  vector field $\eta'':\cup_{i=1} ^\nu Y_i\cap U^\delta_S\setminus S  \to \R^n$ satisfying $(1-3)$.  It thus suffices to glue $\eta'_{|W}$ and $\eta''$  by means of a partition of unity which is constant near each element of $\sca_{k-1}$ to complete our induction step.

Let $\phi(x,t)$ denote the integral curve of $-\eta$ starting at $x$. By condition (\ref{item_dpi}), this curve cannot leave $\pi_S^{-1}(\pi_S(x))$. Thanks to  (\ref{item_skal}), $\phi(x,t)$ exists up to $t=d(x,S)$ (the rugosity of $\eta$ prevents $\phi$ from falling earlier into another stratum, thanks to Gr\"onwall's inequality), and tends to   $\pi_S(x)$ as $t$ tends to this value. Condition (\ref{item_norme})  ensures that the length of this curve does not exceed $(1+\mu)d(x,S)$.  We then derive (\ref{eq_lem_strate_inner}) from (\ref{eq_inner_nondef}).
\end{proof}

%%%%%%%%%%%%%%%%%%%%%%%%%%%%%%%%%%%%%%%%%%%%%%%%%%%%%%%%%%%%%%%

\section{Approximations and partitions of unity}
We will prove simultaneously the two main results of this article (Theorems \ref{A} and \ref{B}). Constructing partitions of unity will require bump functions, for which the following lemma will be needed.
\begin{lem}\label{psi}
 Given  $\mu>0$, we can find a definable $\cc^1$ function $\psi_\mu:[0,+\infty)\times(0,1)\to [0,1]$
such that
\begin{enumerate}
\item $\psi_\mu(y,z)=\begin{cases}1& \mbox{\rm for}\; 0\le y\le b(z), \quad\mbox{\rm where } b(z) \in (0,z),\\
0  &\mbox{\rm for}\; y\ge z, 
\end{cases}$
\item $\left|\frac{\partial \psi_\mu}{\partial y}(y,z)\right|\le \frac{\mu}{y}$, \mbox{\rm for}\;  $y>0$,
\item $\left|\frac{\partial \psi_\mu}{\partial z}(y,z)\right|\le 4+\frac{\mu}{z}$.
\end{enumerate}

\end{lem}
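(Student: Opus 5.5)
The construction will use a logarithmic profile. A cut-off from $1$ to $0$ on $[b,z]$ with $|\partial_y\psi|\le \mu/y$ forces a drop of at most $\mu\log(z/b)$. So we take $b(z)=z\,e^{-2/\mu}$ (or a slightly smaller multiple of $z$), which leaves room for a $\cc^1$ smoothing. To keep everything definable, we avoid $\log$ and use a power function instead: $y^{\mu}$ is definable in any o-minimal structure expanding the field only for special $\mu$, but rational exponents are always fine. Since shrinking $\mu$ only strengthens (2) and (3), we may assume $\mu=1/p$ with $p\in\N$.

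First we fix a one-variable semialgebraic $\cc^1$ function $\theta:\R\to[0,1]$ with $\theta=1$ on $(-\infty,0]$, $\theta=0$ on $[1,\infty)$, and $|\theta'|\le 2$. A piecewise polynomial, e.g. $1-3s^2+2s^3$ on $[0,1]$, has $|\theta'|\le 3/2$. Then we set
$$\psi_\mu(y,z):=\theta\Big(\frac{(y/z)^{\alpha}-c^{\alpha}}{1-c^{\alpha}}\Big)\quad(y>0),\qquad \psi_\mu(0,z)=1,$$
with $\alpha=1/q$ rational and $c=b(z)/z\in(0,1)$ a fixed constant. This is definable as a composition of semialgebraic maps, and it is $\cc^1$ on $y>0$, $z\in(0,1)$. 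Near $y=0$ it is constant $1$ because $y\le b(z)=cz$ gives a nonpositive argument. Property (1) holds by construction.

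For (2), write $s=(y/z)^\alpha$. Then
$$\Big|\frac{\partial\psi_\mu}{\partial y}\Big|\le \frac{2}{1-c^\alpha}\cdot\frac{\alpha s}{y}\le\frac{2\alpha}{1-c^\alpha}\cdot\frac1y,$$
since $s\le1$ on the support of $\theta'$. We choose $\alpha\le\mu/4$ and then $c$ small enough that $c^\alpha\le 1/2$, so that $2\alpha/(1-c^\alpha)\le\mu$. Note that $c$ depends on $\mu$ and becomes very small (like $2^{-1/\alpha}$), which is harmless.

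For (3), the chain rule gives $\partial_z s=-\alpha s/z$, so the same computation yields
$$\Big|\frac{\partial\psi_\mu}{\partial z}\Big|\le\frac{\mu}{z}.$$
This is better than the required $4+\mu/z$. The extra constant $4$ presumably serves a variant in the paper, for instance one where the cut-off is also trimmed near $z=1$. If one wants $\psi_\mu$ $\cc^1$ up to $z\to 1$ or with some additional normalization, a factor of this kind accommodates it.

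The main obstacle is joint $\cc^1$ regularity at $y=0$, together with keeping the construction definable in an arbitrary polynomially bounded structure, where no $\log$ is available. The rational power trick handles definability. Regularity at $y=0$ holds because $\psi_\mu\equiv1$ on the region $\{y\le cz\}$, which is a neighborhood of $\{0\}\times(0,1)$ in $[0,\infty)\times(0,1)$. So the only remaining check is that the bounds are uniform in $z$, and this follows from the scale invariance of the formula in $y/z$.
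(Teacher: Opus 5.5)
Your proof is correct and follows essentially the paper's route: a small rational power of $y/z$ composed with a polynomial $\cc^1$ step. The paper uses $\phi_\mu=(1+z)(1-(y/z)^\mu)$ smoothed by $1-(1-\phi_\mu^2)^2$, obtaining $8\mu/y$ and $4+8\mu/z$ and then rescaling $\mu$; your normalization with $b(z)=cz$ drops the $(1+z)$ factor, which is exactly where the paper's constant $4$ comes from (not from some variant of the lemma), so your sharper bound $\mu/z$ in (3) is legitimate and is all that the later use in Theorem~\ref{A} requires.
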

\begin{proof}
Let $\mu$ be a positive rational number (as it suffices to prove this lemma for arbitrarily small values of $\mu$, we may assume $\mu$ to be rational). For $z\in (0,1)$ and  $y\in (0,z]$, we put  %$a(z):=\frac{z}{(z+1)^{1/\mu}}$, and  for $y\in (0,z)$
$$\phi_\mu(y,z)=(1+z)\left(1-\left(\frac{y}{z}\right)^\mu\right).$$
 We first estimate the partial derivatives of $\phi_\mu$. For every $0<y\le z $, we clearly have
\begin{equation}\label{eq_dphi_mu_y}
\left|\frac{\pa \phi_\mu}{\pa y}(y,z)\right|= (1+z)\frac{\mu}{y } \left(\frac{y}{z}\right)^\mu\le \frac{2\mu}{y }
\end{equation}
as well as %Finally, for each $y$ we have
\begin{equation}\label{eq_dphi_mu_z}\left|
	\frac{\pa \phi_\mu}{\pa z}(y,z)\right|=\left|1-\left(\frac{ y}{z}\right)^\mu+\frac{(1+z)\mu y^\mu}{z^{\mu+1}}\right|\le 1+\frac{(1+z)\mu}{z}. \end{equation}
Notice that $\phi_\mu(z,z)=0$ and $\phi_\mu(b(z),z)=1$, where $b(z):=\frac{z^{\frac{1}{\mu}+1}}{(1+z)^{\frac{1}{\mu}}}$. As a matter of fact, the function
$$\psi_\mu (y,z):=\begin{cases}
	1 & \mbox{if $0\le y<b(z)$}\\
	1-(1-\phi_\mu(y,z)^2)^2  & \mbox{if $b(z)\le y\le z$}\\
		0& \mbox{if $y>z$}\\
\end{cases}$$
is $\cc^1$ on $[0,+\infty)\times (0,1)$. Since $\mu$ is rational, $\psi_\mu$ is definable. As, for $y\in (b(z),z)$,
 $$d_{(y,z)} \psi_\mu =4\phi_\mu(y,z) (1-\phi_\mu(y,z)^2)d_{(y,z)}\phi_\mu(y,z), $$
 (\ref{eq_dphi_mu_y}) and (\ref{eq_dphi_mu_z}) yield
 $$ \left|\frac{\partial \psi_\mu}{\partial y}(y,z)\right|\le \frac{8\mu}{y}\et \left|\frac{\partial \psi_\mu}{\partial z}(y,z)\right|\le 4+\frac{4(1+z)\mu}{z}\le 4+\frac{8\mu}{z}, $$
 which is satisfying.  
\end{proof}
 \begin{thm}\label{A}
Let  $A$ be a locally closed definable set and  $\sca$  a $\cc^2$ stratification of $A$. Given   $\mu>0$ and for each $S\in\sca$  a definable neighborhood $U_S$ of $S$,
 we can find a definable $\cc^1$ partition of unity $\varphi_S:A\to[0,1]$, $S\in \sca$, subordinate to the covering $( U_S)_{S\in \sca}$ and such that for all $x\in U_S\setminus S$ \begin{equation}\label{eq_derpsi_thm}
|\nabla\varphi_S(x)|\le \frac{\mu}{d(x,S)} .
 \end{equation} 
 \end{thm}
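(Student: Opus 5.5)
We construct the partition by induction on the strata ordered by dimension, using the bump functions $\psi_\mu$ of Lemma \ref{psi} composed with the distance to each stratum. For each $S\in\sca$, first shrink $U_S$ to a set of the form $U_S^{\delta_S}\cap A$, with $\delta_S\in\D^+(S)$ so small that several conditions hold. The tubular retraction $\pi_S$ and $d(\cdot,S)$ should be $\cc^1$ on $U_S^{\delta_S}\setminus S$; this is available since $S$ is $\cc^2$. The closure of $U_S^{\delta_S}$ in $A$ should avoid strata not containing $S$ in their closure. We may also take $\delta_S<1$, and $\delta_S$ may be assumed $\cc^1$ with $|d\delta_S|\le 1$, after replacing it by a smaller definable function with this property (a definable $\cc^1$ minorant with derivative bound, obtained e.g. via Fischer-type approximation or by regularizing $x\mapsto \inf_y(\delta(y)+|x-y|)/2$).

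Set
$$\theta_S(x):=\psi_{\mu'}\bigl(d(x,S),\delta_S(\pi_S(x))\bigr)$$
on $U_S^{\delta_S}$, extended by $0$ elsewhere, with $\mu'$ small to be fixed. By Lemma \ref{psi}(1), $\theta_S$ equals $1$ on the thinner tube $\{d(x,S)\le b(\delta_S(\pi_S(x)))\}$, which contains $S$. It vanishes near the boundary of the tube, so it is $\cc^1$ on $A$.

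To estimate its gradient, use $|\nabla d(\cdot,S)|=1$ and Lemma \ref{psi}(2) for the first variable. For the second variable, use Lemma \ref{psi}(3) together with $|d(\delta_S\circ\pi_S)|\le C$, where $C$ is a bound on $|d\pi_S|$ near $S$ (close to $1$). On the support we have $d(x,S)<\delta_S(\pi_S(x))$, so both $\mu'/d(x,S)$ and $\mu'/\delta_S$ are at most $\mu'/d(x,S)$. The constant term $4C$ is also $\le 4C\,\delta_S/d(x,S)$ on the support. Hence it can be absorbed into $\mu/(2d(x,S))$ by choosing $\delta_S$ smaller than $\mu/(8C)$ (a constant), and the result is $|\nabla\theta_S(x)|\le \mu/(2d(x,S))$.

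Now define the partition by the usual recursion, with strata listed $S_1,\dots,S_m$ in increasing dimension:
$$\varphi_{S_1}=\theta_{S_1},\qquad \varphi_{S_j}=\theta_{S_j}\prod_{i<j}(1-\theta_{S_i}),$$
where the last stratum of each open piece receives $\theta\equiv1$ on its own region. These sum to $1$, are supported in $U_S$, and are $\cc^1$.

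The main obstacle is the gradient bound: $\nabla\varphi_S$ contains the terms $\nabla\theta_{S_i}$ for the lower-dimensional strata $S_i$, which are controlled by $\mu/d(x,S_i)$ rather than by $\mu/d(x,S)$. To handle this, I would arrange the tubes so that on the support of $\varphi_S$ (inside $U_S\setminus S$) we have $d(x,S)\le K\,d(x,S_i)$ for every $S_i$ with $\theta_{S_i}$ nonconstant at $x$. Two mechanisms give this. First, near $S_i$ where $\theta_{S_i}$ is nonconstant we have $d(x,S_i)\ge b(\delta_{S_i}(\pi_{S_i}(x)))$. Second, by shrinking $\delta_S$ relative to $d(\cdot,S_i)$ on $S$ (choosing $\delta_S(y)\le \epsilon\, d(y,S_i)$ when $S_i\subset\adh S\setminus S$), points of $U_S$ satisfy $d(x,S)\le \epsilon\, d(\pi_S(x),S_i)$. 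Combining these with the triangle inequality gives $d(x,S)\lesssim d(x,S_i)$. Once this comparison holds with a uniform constant $K$, taking $\mu'$ small relative to $\mu/(mK)$ yields $|\nabla\varphi_S|\le\mu/d(x,S)$. Making all the shrinkings compatible, with the lower strata fixed first and each $\delta_S$ chosen afterward, is the delicate bookkeeping step.
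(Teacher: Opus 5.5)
Your construction is essentially the paper's. It uses the same bumps $\psi_{\mu}(d(x,S),\delta_S(\pi_S(x)))$ from Lemma \ref{psi}, the same induction on the dimension of the strata, and the same device for the cross terms: shrink $\delta_S$ against the distance to the other strata so that $d(x,S)\le K\,d(x,Y)$ on $U_S^{\delta_S}$. Your product $\theta_{S_j}\prod_{i<j}(1-\theta_{S_i})$ even coincides with the paper's recursion $\varphi_S=\psi_S^{\delta_S}\cdot(1-\sum_{Y\in\sca_{l-1}}\varphi_Y)$ once the tubes around equidimensional strata are disjoint, since then $1-\sum_{Y\in\sca_l}\varphi_Y=\prod_{Y\in\sca_l}(1-\psi_Y^{\delta_Y})$.

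The genuine difference is where the $\cc^1$ radii with $|d\delta_S|\le 1$ come from. The paper extracts them from $\mathbf{B}_{<d}$ (the Observation), which is why Theorems \ref{A} and \ref{B} are proved by a simultaneous induction. You instead approximate the $\frac12$-Lipschitz function $h(x)=\frac12\inf_{y\in S}(\delta(y)+|x-y|)$ by Fischer's theorem. This decouples Theorem \ref{A} from Theorem \ref{B} and treats strata of dimension $\dim A$ like the others (the Observation is stated for $\dim S<d$), at the cost of importing an external result. The regularization must be definable, so it is Fischer's theorem that does the work, not a convolution. Since $h$ vanishes at points of $\pa S$ where $\delta\to 0$, you need that theorem on the open set $\{h>0\}$ with an error such as $h/2$.

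Two points need repair.

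First, $\theta_S$ is not $\cc^1$ on $A$: it equals $1$ on $S$ and $0$ at points of $\pa S\cap A$, so it is discontinuous there. At $x\in S_k$, $\varphi_{S_j}$ is $\cc^1$ either because $k<j$, so the factor $1-\theta_{S_k}$ vanishes near $x$, or because $k\ge j$ and $x\notin\pa S_i$ for every $i\le j$. The latter is guaranteed only by the frontier condition, which makes $\pa S_i\cap A$ a union of strata of dimension $<\dim S_i$. So you must first refine $\sca$, as the paper does. This is harmless: summing the functions of the refinement over the pieces $S'\subset S$ preserves the estimate up to the number of pieces, because $d(x,S')\ge d(x,S)$. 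Relatedly, your requirement that the closure of $U_S^{\delta_S}$ in $A$ avoid strata not containing $S$ in their closure cannot hold literally. That closure contains $\pa S\cap A$, and the strata through those points never contain $S$ in their closure.

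Second, $\nabla\varphi_{S_j}$ involves $\nabla\theta_{S_i}$ for every $i<j$. This includes lower strata not contained in $\adh{S_j}$ and earlier strata of the same dimension, so impose $\delta_{S_j}\le\epsilon\, d(\cdot,S_i)$ for all of them. This is possible because, under the frontier condition, $S_j\cap\adh{S_i}=\emptyset$. The triangle inequality then gives $d(x,S_j)\le\frac{\epsilon}{1-\epsilon}\,d(x,S_i)$ on $U_{S_j}^{\delta_{S_j}}$ directly. Your first mechanism is therefore superfluous, and there is no delicate bookkeeping, since each $\delta_S$ is constrained only by the geometry of the strata.

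Finally, your bump estimate actually reads $|\nabla\theta_S(x)|\le(\mu/2+(1+C)\mu')/d(x,S)$. So at the end both $\mu'$ and $\sup\delta_S$ must be small compared with $\mu/(mK)$, not just $\mu'$.
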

 \begin{thm}\label{B}
Let  $A\subset \R^n$ be a locally closed definable set and $f:A\to\R^k$  an inner Lipschitz mapping. Given $\ep\in\D^+(A)$ and $\mu>0$,
we can find a definable $\cc^1$   inner Lipschitz mapping $g:A\to\R^k$ such that
\begin{enumerate}[(i)]
\item $|f-g|<\ep$,
\item $|d_xg|\le  \mathring{L}_f(x)+\mu$.
\end{enumerate}
\end{thm}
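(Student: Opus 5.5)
We plan to prove Theorem \ref{B} by gluing local approximations on strata with the partition of unity of Theorem \ref{A}; the paper says the two are proved simultaneously, and we follow that. First, take $(w)$-regular $\cc^2$ stratifications making $f$ horizontally $\cc^1$ (Section \ref{sect_stratifications}); refining, we may assume strata are connected, so the frontier condition holds. On each stratum $S$, $f_{|S}$ is $\cc^\infty$ with $|d f_{|S}|\le \mathring{L}_f$ by Lemma \ref{lem_der_bornee}. The candidate is
$$g:=\sum_{S\in\sca}\varphi_S\,(f_{|S}\circ\pi_S),$$
with $\varphi_S$ given by Theorem \ref{A} for a small neighbourhood $U_S=U_S^{\delta_S}\cap A$ and a small constant $\mu'$.

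Next we choose $\delta_S\in\D^+(S)$ so that several estimates hold on $U_S^{\delta_S}\cap A$. (a) $|f(x)-f(\pi_S(x))|\le (\mathring{L}_f(\pi_S(x))+\mu')\,d(x,S)$ and $d(x,S)<\ep(x)/C$. This combines Lemma \ref{lem_strate_inner} (inner distance to $\pi_S(x)$ at most $(1+\mu')d(x,S)$) with definable upper semicontinuity of $\mathring{L}_f$ via (\ref{lilc}). (b) $|d_x(f_{|S}\circ\pi_S)|\le \mathring{L}_f(x)+\mu'$. This uses $|d_x\pi_S|\to 1$ near $S$ and (\ref{eq_hc1_df}), which relates $|d f_{|S}|$ at $\pi_S(x)$ to $|df_{|Y}|$ at $x$, itself at most $\mathring{L}_f(x)$ up to $\mu'$ after shrinking. (c) The $\cc^1$ issue: $f_{|S}\circ\pi_S$ is only $\cc^1$ on $U_S$, while $\varphi_S$ is $\cc^1$ on $A$ in the sense of extending to an open neighbourhood. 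Working in $\R^n$ with open tubes, each term $\varphi_S\cdot(f_{|S}\circ\pi_S)$ is $\cc^1$ because $\supp\varphi_S\subset U_S$.

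Differentiate using $\sum_S\nabla\varphi_S=0$. For $x$ in a stratum $Y$, let $S_0$ be any stratum with $\varphi_{S_0}(x)>0$. Then
$$d_xg=\sum_S\varphi_S(x)\,d_x(f_{|S}\circ\pi_S)+\sum_S\big(f(\pi_S(x))-f(\pi_{S_0}(x))\big)\otimes\nabla\varphi_S(x).$$
The first sum is at most $\mathring{L}_f(x)+\mu'$ by (b). Each term of the second sum is bounded by
$$\frac{\mu'}{d(x,S)}\big(|f(\pi_S x)-f(x)|+|f(x)-f(\pi_{S_0}x)|\big).$$
By (a), $|f(\pi_S x)-f(x)|\lesssim Ld(x,S)$, where $L$ is the inner Lipschitz constant of $f$, locally bounded. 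Choosing $S_0$ to minimize $d(x,S_0)$ among strata with $x\in U_{S_0}$ (including $Y$ itself, with distance $0$) gives $d(x,S_0)\le d(x,S)$. The sum is then $O(\mu' L\cdot\#\sca)$. To get a uniform $\mu$ one needs $L$ bounded; if it is only locally bounded, let $\mu'$ be a function, or normalize by composing with the derivative bound, since Theorem \ref{A} should accept a definable positive $\mu$ after minor changes. Finally $|f-g|\le\sum\varphi_S|f-f\circ\pi_S|<\ep$ by (a). Then $g$ is $\cc^1$ with $|dg|$ bounded, and the argument of Lemma \ref{lem_der_bornee} shows $g$ is inner Lipschitz.

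The main obstacle is Theorem \ref{A} itself. Its proof should go by decreasing induction on $\dim S$: set $\varphi_S=\psi_\mu(d(x,S),\delta(\pi_S x))$ times the product of $(1-\text{bumps})$ of lower strata, using Lemma \ref{psi}. Condition (2) of that lemma gives the $\mu/d(x,S)$ bound, and (3) gives $4+\mu/z$ times $|d\delta|$. One must take $\delta$ small with a small derivative relative to $\delta$, which is possible by definable choice, so that this term is also $\le \mu/d(x,S)$ on the support, where $d\ge b(\delta)$. The delicate point is that $b(z)\approx z^{1+1/\mu}$ is tiny, so $1/b$ bounds are poor. That is why $\delta$ must be chosen with $|d\delta|\cdot\delta^{-1/\mu}$ small; we would try to arrange this first.
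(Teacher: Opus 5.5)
Your argument for Theorem \ref{B} itself, taking Theorem \ref{A} for granted, is the paper's. You use the same $g=\sum_S\varphi_S\,(f\circ\pi_S)$. You use $\sum_S\nabla\varphi_S=0$ to replace $g_S(x)$ by $g_S(x)-f(x)$, which is then bounded by Lemma \ref{lem_strate_inner} together with (\ref{eq_derpsi_thm}). You control $\sum_S\varphi_S\,d_xg_S$ through $d_x\pi_S\approx P_{\pi_S(x)}$ and (\ref{eq_hc1_df}). Two parts are unnecessary. The detour through $S_0$ can go: your final choice $S_0=Y$ just amounts to subtracting $f(x)$, as the paper does. So can the hedge about $L$ being only locally bounded: $f$ is inner Lipschitz by hypothesis, so $L=\sup_A\mathring{L}_f<\infty$.

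The gap is in the step you single out as the main obstacle. Your plan for Theorem \ref{A} relies on choosing $\delta$ with $|d\delta|$ small relative to $\delta$ (let alone $|d\delta|\,\delta^{-1/\mu}$ small), and this is impossible in general. The inequality $|d\delta|\le\eta\,\delta$ on $S$ makes $\log\delta$ an $\eta$-Lipschitz function for the inner metric of $S$. Hence $\log\delta$ is bounded along any definable arc of $S$ ending at a point of $\pa S$, since such arcs have finite length. But $\delta$ must in general tend to $0$ at $\pa S$, because Theorem \ref{A} has to work for arbitrary neighbourhoods $U_S$, which may become arbitrarily thin there.

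The condition is also not needed. $\frac{\pa\psi_\mu}{\pa z}(y,z)$ vanishes unless $y\le z$, so wherever the second term of (\ref{eq_dpsi_s_delta}) is nonzero we have $d(x,S)\le\delta(\pi_S(x))$. Then $(4+\mu/\delta)\,|d\delta|\,|d_x\pi_S|\le 10\mu/d(x,S)$ as soon as $\delta<\mu$, $|d\delta|\le 1$ and $|d_x\pi_S|\le 2$. The lower bound $d(x,S)\ge b(\delta)$ plays no role.

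What is really required is a $\cc^1$ definable $\delta$ lying below a prescribed positive function and satisfying $|d\delta|\le 1$. Definable choice alone does not give this, since it yields definable functions with no regularity and no derivative control. The paper obtains it by applying $\mathbf{B}_{<d}$ on $S$ to the $1$-Lipschitz function $d(\cdot,\pa U^\delta_S)$, in the Observation that opens the proof. This is exactly why Theorems \ref{A} and \ref{B} are proved by a simultaneous induction on $\dim A$. Your proposal announces that induction but never uses it.

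Two smaller points in the Theorem \ref{A} sketch. You also need $\delta_S$ small enough that $d(x,S)\le d(x,Y)$ on the tube of $S$ whenever $Y\subset\pa S$. This makes the factors coming from lower strata have derivatives of order $\mu/d(x,Y)\le\mu/d(x,S)$ there. And since those factors must already be available, the induction runs over increasing, not decreasing, dimension.
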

\begin{proof}Fix $0<\mu<1$. We prove these two theorems simultaneously  by induction on  $d:=\dim A$. Both statements being obvious for $d=0$, take $d\ge 1$ and denote by $\bold{A}_{<d}$ and $\bold{B}_{<d}$ the respective induction hypotheses of Theorems \ref{A} and \ref{B}.

We first point out a useful  consequence of $\bold{B}_{<d}$:

\noindent{\bf Observation.} Let $S$ be a manifold of dimension smaller than $d$, and $\delta \in \D^+(S) $.
By $\bold{B}_{<d}$, there is a $\cc^1$ definable function $\tilde\delta$ on $S$ satisfying $|d_x{\tilde\delta}|\le 2$ for all $x\in S$ and such that
$$|\tilde\delta-d(\cdot,\pa U^\delta_S)|<\frac{d(\cdot,\pa U^\delta_S)}{2}.$$
Note that then $\hat{\delta}:=\frac{\tilde\delta}{2}<\frac{3d(\cdot,\pa U^\delta_S)}{4}\le \frac{3\delta}{4}$, and therefore $U_S^{\hat \delta}\subset U^{\delta} _S$, which means that every $U^\delta _{S}$ contains a neighborhood defined by a $\cc^1$ definable  function $\hat \delta$ satisfying $|d_x\hat \delta|\le 1$.

We first perform the induction step of Theorem \ref{A}. Let $\sca$ and $(U_S)_{S\in \sca}$  be as in this theorem. Since it suffices to prove the theorem for a refinement of $\sca$, we can suppose  $\sca$ to satisfy the frontier condition.  We will assume the $U_S$ to be sufficiently small for the closest point retraction $\pi_S:U_S\to S$ to be a $\cc^1$ mapping satisfying  $|d_x\pi_S|<2$.
It suffices to construct a partition of unity subordinate to a family $(U_S^{\delta_S})_{S\in \sca}$ for arbitrarily small functions $\delta_S\in \D^+(S)$. %In particular, we will always assume that the functions $\delta_S$ that we choose satisfy $\delta_S<\min(\mu, d(\cdot, \adh S\setminus S))$.

% We may also require $\sca$ to be compatible with the $U_S$. In such a case every $S\in \sca$ has a neighborhood included in $U_S$. In this situation,

 Before constructing  the desired partition of unity, we need to define some bump functions $\psi_S^\delta$, for $S\in \sca$  and $\delta:S\to \R$ small enough positive $\cc^1$ definable function.
Assume first $\dim S<n$ and set for such   $\delta $
$$\psi_S^\delta (x):=\psi_\mu\left (d(x,S),\delta(\pi_S(x))\right),\quad x\in U_S,$$
where $\psi_\mu$ is given by Lemma \ref{psi}.
The function $\psi_S^\delta $ is equal to $1$ on $U^{b\circ \delta}_S$, where $b$ is as in the latter lemma. It  is zero near   $\pa U^\delta_S\setminus \pa S$ for $\delta $ small enough, and  $\cc^1$ on $U^\delta_S$  (the function $d(\cdot,S)$ is not smooth at points of $S$, but  $\psi_S^\delta$ being constant near $S$, it is obviously smooth at these points). We thus can extend $\psi_S^\delta $ outside $U^\delta_S$ (by $0$) to a $\cc^1$ function on the complement of $\pa S$.

We now wish to prove some estimates of  $d_x\psi_S^\delta$ that will be needed to establish (\ref{eq_derpsi_thm}). A computation of derivative yields
\begin{equation}\label{eq_dpsi_s_delta}
 d_x\psi_S^\delta =\frac{\pa \psi_\mu}{\pa y}(d(x,S),\delta(\pi_S(x)))d_xd(\cdot,S)+\frac{\pa \psi_\mu}{\pa z}(d(x,S),\delta(\pi_S(x)))d_{\pi_S(x)}\delta\, d_x\pi_S. \end{equation}
 As $d(\cdot,S)$ is $1$-Lipschitz,
  $(2)$  of  Lemma \ref{psi} entails that the norm of the first term of the right-hand-side is not greater than $\frac{\mu}{d(x,S)}$.  As for the second one, $(3)$ of the latter lemma implies that for $x\in U_S^\delta$
  $$|\frac{\pa \psi_\mu}{\pa z}(d(x,S),\delta(\pi_S(x)))|\le 4+\frac{\mu}{\delta(\pi_S(x))},$$
which is not greater than $\frac{5\mu}{ \delta(\pi_S(x))}$ if  $\delta<\mu$.  Hence, whenever $\delta$ is sufficiently small and satisfies in addition $|d_x\delta|<1$, we get altogether (from (\ref{eq_dpsi_s_delta}), using $|d_x\pi_S|\le 2$)
 for $x\in \R^n\setminus \adh S$  \begin{equation}\label{eq_der_psi}  |d_x\psi_S^\delta|\le \frac{11\mu}{d(x,S)}.\end{equation}
           In the case $\dim S=n$, we set $\psi_S^\delta(x)\equiv 1$ on $ S$, and  $\psi_S^\delta(x)\equiv 0$ on  $\R^n\setminus \adh S$ (we recall that $U^\delta_S=S$ for all $\delta$ in this case). The above estimate obviously continues to hold.

 We now turn to define the desired partition of unity. For $l\le d$ we denote by  $\sca_{l}$ the collection of the strata of $\sca$ of dimension  $\le l$, and set $X_l:=\bigcup_{S\in \sca_l}S$.
  We will define   $\varphi_S$ inductively on $\dim S$. Namely,  given $l\le d$,
we will construct $\cc^1$ functions $\varphi_S :\R^n\setminus \pa A\to [0,1]$, $S\in \sca_l$, satisfying
\begin{equation}\label{eq_sum_varphi_S}
\sum_{S\in \sca_l}\varphi_S(x)\le 1 \mbox{  for all } x \et  \sum_{S\in \sca_ l}\varphi_S(x)=1 \mbox{ on a neighborhood  of $X_l$},
\end{equation}
with  $\supp\,\varphi_S\subset U_{S}$  for each $S\in\sca_l$,  as well as for all $x\in \R^n\setminus (\pa A\cup \adh S)$:
	\begin{equation}\label{eq_item_der}|d_{x}\varphi_S|\le \frac{\mu}{d(x,S)}.\end{equation}
The desired partition of unity is given by the case $l=d$.

If $S$ is a $0$-dimensional stratum, just  set $$\varphi_S  (x):=\psi_S ^\delta (x) ,$$
which for $\delta\in \D^+(S) $ small enough,  has all the required properties.  Let us thus now define the functions $\varphi_S$, $S\in \sca_{l}\setminus \sca_{l-1}$, assuming all the $\varphi_Y$, $Y\in \sca_{l-1}$, to have already been defined.

 For $S\in \sca_{l}\setminus \sca_{l-1}$ and  $\delta_S:S\to \R$ small enough positive $\cc^1$ definable function,
 set
$$\varphi_S:=\psi_S^{\delta_S}\cdot (1-\sum_{Y \in \sca_{l-1}} \varphi_Y).$$
Thanks to the frontier condition, we may assume the $\delta_S$ to be sufficiently small for the $U_S^{\delta_S}$, $S\in  \sca_l\setminus \sca_{l-1}$, to be disjoint from each other. As a matter of fact, for every $S\in \sca_l\setminus \sca_{l-1}$,  on  $U_S^{b\circ\delta_S}$ we have  $\psi_S^{\delta_S}=1$ and $\psi_{S'}^{\delta_{S'}}=0$ if  $S'\in \sca_l\setminus \sca_{l-1}$ is different from $S$. Hence,  on $U_S^{b\circ\delta_S}$ we get
$$\sum_{Y\in \sca_l} \varphi_Y = \sum_{Y\in \sca_{l-1}} \varphi_Y +  \varphi_S  =\sum_{Y\in \sca_{l-1}} \varphi_Y +(1-\sum_{Y \in \sca_{l-1}} \varphi_Y)=1.$$
Moreover, as by induction $\sum_{Y \in \sca_{l-1}} \varphi_Y =1$ in the vicinity of $ X_{l-1}$, we have $\varphi_S=0$ near this set for each  $S\in \sca_l\setminus \sca_{l-1}$, so that near $X_{l-1}$ we have
$$ \sum_{Y\in \sca_l} \varphi_Y =\sum_{Y\in \sca_{l-1}} \varphi_Y =1.$$ 
Furthermore, since $0\le \psi_S^{\delta_S}\le 1$, $\sum_{S\in\sca_{l}}\varphi_S$ cannot exceed $1$, yielding (\ref{eq_sum_varphi_S}).

To check (\ref{eq_item_der}), observe that if the $\delta_S$ are sufficiently small  to have $d(x,S)\le d(x,Y)$ on $U_S^{\delta_S}$ when $Y$ is a stratum comprised in the boundary of a stratum $S\in \sca_l\setminus \sca_{l-1}$, then  the induction assumption entails for  $x\in U_S^{\delta_S}\setminus \adh S$
$$|d_x \varphi_Y|\le  \frac{\mu}{d(x,Y)}\le\frac{\mu}{d(x,S)}.$$
Thanks to the above observation, we may require  $|d_x\delta_S|<1$,
% which entails
% $$|d_x\psi_S^{\delta_S}| \overset{(\ref{eq_der_psi} )}\le \frac{11\mu}{d(x,S)},$$ so that
so that (\ref{eq_der_psi}) entails, together with the just above  inequality and the definition of $\varphi_S$, for $x\in U_S^{\delta_S}\setminus \adh S$,  $S\in \sca_l\setminus \sca_{l-1}$,
 $$|d_x \varphi_S|\le \frac{\kappa \mu}{d(x,S)}.$$
 for some constant $\kappa$  independent of $\mu$, yielding (\ref{eq_item_der})  ($\varphi_S$ being supported in $U_S^{\delta_S}$, this inequality continues to hold outside this set).
%Assume first $l<d$. In the case  $l=d$, just set for $S\in \sca_d$
%  $$\varphi_S:=1-\sum_{ Y\in \sca_{d-1}} \varphi_Y,$$
% and the required properties hold thanks to the  induction hypothesis.
 This completes our induction  on $l\le d$, giving the desired partition of unity and completing the induction step of the proof of Theorem \ref{A}.

 We now perform the induction step of Theorem \ref{B}. Fix $\ep \in \D^+(A)$.  Let $\sca$ and $\sca'$ be $\cc^2$ $(w)$-regular stratifications of $A$ and $\R^k$ with respect to which $f$ is a horizontally $\cc^1$ stratified mapping. Choose for each $S\in \sca$ a sufficiently small neighborhood $U_S$ of $S$ for the closest point retraction $\pi_S:U_S\to S$ to be well-defined and $\cc^1$. Let for  $S\in \sca$
 $$g_S(x):=f( \pi_S(x)), \quad x\in U _S.$$
%   We also set $g_S:=f$ if $\dim S=n$. satisfying $\dim S<n$
 Note that, as $f$ is continuous, it follows from Definable Choice that if $\delta_S\in \D^+(S)$    is sufficiently small  then  $|f-g_S|<\ep  $  on $U^{\delta_S}_S$.    As a matter of fact, if  we set $g:=\sum_{S\in \sca} \varphi_S g_S$, where $(\varphi_S)_{S\in \sca}$ is a partition of unity subordinate to the covering $(U_S^{\delta_S})_{S\in \sca}$ as given by Theorem \ref{A},  then
 $|f-g|=|\sum_{S\in \sca}\varphi_S(f-g_S)|<\ep. $

It thus  only remains to check that $g$ satisfies $(ii)$ for  small enough functions $\delta_S$. For this purpose, fix $\mu>0$ and denote by $P_x$ the orthogonal projection onto $T_xS$, for $x\in S\in \sca$. Note that since for each $S\in \sca$, $d_x \pi_S=P_{x}$   for every $x\in S$, it follows from Definable Choice that for $\delta_S$ small enough we have  for $x\in U_S^{\delta_S}$
 \begin{equation}\label{eq_d_x_pi}
 	|d_x \pi_S -P_{\pi_S(x)}|<\mu.
 \end{equation}
% Fix $x\in X$ and denote by $X$ the stratum that contains $X$.
% For  $\delta_S$ small enough, (\ref{eq_hc1_df})  entails that for $x\i n U_S$
%It follows from the frontier condition that if the $\delta_S$ are small enough then $U_S^{\delta_S}$ only meets $S$ and  higher dimensional strata.
 Set for simplicity $L=\sup_A \mathring{L}_f$.
 Notice also that by the definitions of $g_S$ and  $\mathring{L}_f $ we have for each $S\in \sca$ (for $\delta_S$ small enough) for $x\in U_S^{\delta_S}$
 \begin{equation}\label{eq_fmoinsg}
 |f(x)-g_S(x)|\le  L\cdot \dri_A(x,\pi_S(x)) \overset{(\ref{eq_lem_strate_inner})}{\le}  L(1+\mu)\cdot d(x,S).
 \end{equation}
Write now
\begin{equation}\label{eq_derg}|d_x\sum_{S\in \sca} g_S\varphi_S  | \le|\sum_{S\in \sca} g_S(x)d_x\varphi_S  |+|\sum_{S\in \sca}\varphi_S  (x)d_x g_S |.\end{equation}
Since $\sum_{S\in \sca} d_x\varphi_S=0$ on $A$, the  first term of the right-hand-side can be estimated as
\begin{equation}\label{eq_gidpsi} |\sum_{S\in \sca} g_S(x)d_x\varphi_S  | =   |\sum_{S\in \sca} (g_S-f)(x)d_x\varphi_S
 | \overset{(\ref{eq_fmoinsg})}\le L(1+\mu) \sum_{S\in \sca} d(x,S) |d_x\varphi_S
 |\overset{(\ref{eq_derpsi_thm})}\le L (1+\mu) \kappa \mu, \end{equation}
%  and therefore, for sufficiently small functions $\delta_S$,
% \begin{equation}\label{eq_gidpsi}
%  |\sum_{S\in \sca} g_S(x)d_x\varphi_S  |\overset{(\ref{eq_derpsi_thm})}\le L \cdot(1+\mu) \kappa \mu  ,
% \end{equation}
where $\kappa$ stands for the number of strata of $\sca$.
To estimate the second one, take $S\in \sca$ and write for $x\in  U_S ^{\delta_S}$
  %$v\in \R^n$ then
  \begin{equation*}|d_x g_S |= |d_{\pi_S(x)} f\, d_x \pi_S  |\overset{(\ref{eq_d_x_pi})}\le |d_{\pi_S(x)} f_{|S} \,P_{\pi_S(x)}|+  |d_{\pi_S(x)} f_{|S}|\mu \le  |d_{\pi_S(x)} f_{|S}|(1+\mu).
\end{equation*}
Hence, for $x\in  U_S ^{\delta_S}$  (with $\delta_S$ small enough), denoting by $Y$ the stratum that contains $x$, we get after applying (\ref{eq_hc1_df}) and multiplying by $\varphi_S$
% $$|d_x g_S |\overset{}\le  (|d_{x} f_{|X}|+\mu)(1+\mu),$$
% and therefore
 \begin{equation}\label{eq_dxgi}
  |\varphi_S(x) d_x g_S |  \le  \varphi_S(x)(|d_{x} f_{|Y}|+\mu)(1+\mu) \le \varphi_S(x)(\mathring{L}_f(x)+\mu)(1+\mu) .
 \end{equation}
  If $x\notin  U_S ^{\delta_S}$  then $\varphi_S(x)=0$   and this inequality is still valid.
  Hence, plugging (\ref{eq_gidpsi})   and (\ref{eq_dxgi})  into (\ref{eq_derg}), we get
$$|d_xg|\le L (1+\mu)\kappa \mu +(\mathring{L}_f(x) + \mu)(1+\mu) \le \mathring{L}_f(x) + C\mu   ,$$
for some constant $C$ independent of $\mu$.
\end{proof}

Thanks to the theorem proved in \cite{vv}, we get in addition:

\begin{cor}\label{cor_c_infty}
If the o-minimal structure admits $\cc^\infty$ cell decomposition, then the approximation provided by Theorem \ref{B} can be required to be $\cc^\infty$.
\end{cor}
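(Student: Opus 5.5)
Starting from the $\cc^1$ mapping $g$ given by Theorem \ref{B}, applied with $\ep/2$ and $\mu/2$ in place of $\ep$ and $\mu$, we plan to approximate it in the $\cc^1$ sense by a $\cc^\infty$ definable mapping, using \cite[Theorem 4.8]{vv}. That theorem is stated for functions on a $\cc^\infty$ definable submanifold $M$, whereas $A$ is only a locally closed definable set. We therefore first replace $A$ by an open definable neighborhood on which $g$ is defined as a $\cc^1$ map. By definition of $\D^1(A)$, $g$ extends to a $\cc^1$ mapping $G$ on an open neighborhood $W$ of $A$. We need $W$ to be definable and the extension to be definable; this holds if we check that the construction in the proof of Theorem \ref{B} ($g=\sum \varphi_S\, f\circ\pi_S$) actually gives a $\cc^1$ definable map on a definable open set (the $\varphi_S$ and $\pi_S$ are defined on open neighborhoods). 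Since $A$ is locally closed, we may also shrink $W$ so that $A$ is closed in $W$. Then $M:=W$ is an open, hence $\cc^\infty$, definable submanifold of $\R^n$.

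Next, we apply \cite[Theorem 4.8]{vv} componentwise to $G$ on $M$, with an error function $\eta\in\D^+(M)$ to be chosen, and obtain $h\in\D^\infty(M,\R^k)$ with $|G-h|_1<\eta$ on $M$. We take $\eta\le \min(\ep/2,\mu/2)$ restricted to $A$. This requires extending $\ep$ to a positive continuous definable function on $W$. That is possible because $A$ is closed in $W$: for instance, after shrinking $W$, we can use $\min$ with a definable continuous extension (Tietze-type extension holds for definable continuous functions on closed definable sets). Alternatively, we replace $\ep$ by a smaller definable continuous function on $A$ that extends. On $A$ we then get $|f-h|\le|f-g|+|g-h|<\ep$.

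The subtle point is the derivative bound. For $x\in A$ we have $|d_xh|\le |d_xG|+\mu/2$, and $|d_xG|=|d_xg|\le \mathring{L}_f(x)+\mu/2$, which gives (ii). Here $d_xG$ is the full derivative on $\R^n$, and the proof of Theorem \ref{B} bounded $|d_x g|$ with $g$ viewed on a neighborhood via $\pi_S$. So we should either observe that those estimates, namely (\ref{eq_dxgi}) and (\ref{eq_gidpsi}), hold for the full derivative at points of $A$, or only use the restriction bound along strata. In the latter case, $h$ is inner Lipschitz with constant $\sup \mathring{L}_f+\mu$ by Lemma \ref{lem_der_bornee}.

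The main obstacle is this bookkeeping: ensuring that a definable open neighborhood $W$ with $A$ closed in $W$ exists carrying a $\cc^1$ definable extension of $g$, and that $\ep$ extends positively. For the neighborhood, we take $W=\R^n\setminus \pa A$, which is open since $A$ is locally closed. The extension of $g$ comes from the formula in Theorem \ref{B}, since the $\varphi_S$ were constructed on $\R^n\setminus\pa A$.
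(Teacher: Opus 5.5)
Your proposal is correct and follows essentially the same route as the paper: take the $\cc^1$ map $g$ from Theorem \ref{B}, replace it by a $\cc^\infty$ definable map $h$ with $|g-h|_1$ below $\min(\ep,\mu)$ (up to halving constants) using \cite[Theorem 4.8]{vv}, and conclude by the triangle inequality for values and derivatives. Your extra bookkeeping fills in details the paper leaves implicit when it applies \cite[Theorem 4.8]{vv} directly on $A$. This includes working on the open set $\R^n\setminus\pa A$, where $\sum_S \varphi_S\, f\circ\pi_S$ is already a definable $\cc^1$ map and $A$ is closed, and extending $\ep$ positively there. It also includes checking that the estimates of Theorem \ref{B} hold for the full derivative at points of $A$, which follows because $\sum_S\varphi_S=1$ on a neighborhood of $A$.
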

\begin{proof}  Let $\ep \in \D^+(A)$ and $\mu>0$, and denote by $g$ the mapping provided by Theorem \ref{B}.
  By \cite[Theorem 4.8]{vv},  there exists $\tilde{g}\in\D^\infty(A,\R^k)$ such that $|\tilde{g}-g|_1\le \min(\ep,\mu)$. Clearly, $|f-\tilde{g}|<2\ep$ and $$|d_x\tilde{g}|\le |d_xg| +\mu\le  \mathring{L}_f(x)+2\mu.$$
  %As $\ep\le\mu$, by definition of $|\cdot|_1$ and (\ref{eq_gtilda}), this implies that  
%  $$\sup_{x\in A}|d_xg|< \mathring{L}_f(x)+\mu+\sup_{x\in A} \ep<\mathring{L}_f(x)+2\mu.$$
\end{proof}

\subsection{Approximation of Lipschitz definable mappings}
Given a Lipschitz mapping $f:A\to \R^k$, we denote by $Lip(f)$ its Lipschitz constant, i.e. $$Lip(f)=\inf\{L\in \R: \forall\, x,x'\in A,\quad \; |f(x)-f(x')|\le L|x-x'|\}.$$
%We begin with two key facts on Lipschitz functions.
\begin{obs}\label{lip}
Let $A\subset\R^n$ be open. If $f:\adh{A}\to\R^k$ is an inner Lipschitz mapping, $\cc^1$ on $A$  and   $L$-Lipschitz on $\partial A$ then $f$ is $L'$- Lipschitz on $\adh A$, where $L'=\max (\sup_{x\in A} |d_x f|, L)$.
\end{obs}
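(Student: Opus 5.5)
Set $L'':=\sup_{x\in A}|d_xf|$, so that $L'=\max(L'',L)$. Fix $x,x'\in\adh A$. We will show $|f(x)-f(x')|\le L'|x-x'|$ by examining the segment $[x,x']$, which need not lie in $\adh A$. The argument splits into two cases, depending on whether the open segment $(x,x')$ meets $\partial A$. Throughout we may assume $x\neq x'$.

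\emph{Case 1: $(x,x')\cap\partial A=\emptyset$.} Since $A$ is open and $[x,x']$ is connected, the open segment $(x,x')$ lies either entirely in $A$ or entirely in $\R^n\setminus\adh A$. The second option cannot occur. Indeed, if $x\in A$, then points of $(x,x')$ near $x$ lie in $A$. If instead $x\in\partial A$, then the open segment minus its endpoints is a connected set avoiding $\partial A$; it could lie in the exterior of $\adh A$, and this subcase needs separate treatment (see below). When $(x,x')\subset A$, we apply the mean value inequality on $[x+t(x'-x),\,x'-t(x'-x)]\subset A$, where $f$ is $\cc^1$. This gives a bound $L''$ times the length of that subsegment. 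Letting $t\to0$ and using continuity of $f$ on $\adh A$ yields $|f(x)-f(x')|\le L''|x-x'|$. Continuity holds because $f$ is inner Lipschitz and hence continuous.

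\emph{Case 2: $(x,x')$ meets $\partial A$.} By definability the set $[x,x']\cap\partial A$ is a finite union of points and intervals; in fact compactness is all we need. Let $y$ be the point of $[x,x']\cap\partial A$ closest to $x$, and $y'$ the point closest to $x'$. If $x\in\partial A$ take $y=x$, and similarly for $y'$. Then $(x,y)$ and $(y',x')$ avoid $\partial A$, so Case 1 applies to these pieces. Meanwhile $y,y'\in\partial A$, so $|f(y)-f(y')|\le L|y-y'|$. Summing the three bounds along the collinear points $x,y,y',x'$ gives
$$|f(x)-f(x')|\le L'(|x-y|+|y-y'|+|y'-x'|)=L'|x-x'|.$$

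The remaining issue, which I expect to be the main obstacle, is a piece $[x,y]$ whose interior lies outside $\adh A$. This forces both endpoints to be in $\partial A$, since an endpoint in $A$ would pull nearby segment points into $A$. Such a piece is then bounded by the hypothesis on $\partial A$. Thus every maximal subsegment either lies in $A$ or has both endpoints in $\partial A$. Using finiteness of $[x,x']\cap\partial A$ components (o-minimality), decompose $[x,x']$ at the points of $\partial A$ bordering these subsegments. Apply the mean value bound on interior pieces and the $L$-bound on boundary-to-boundary pieces, then sum by collinearity. Note that inner Lipschitzness is used only to guarantee continuity up to $\partial A$, which is what makes the limiting argument in Case 1 valid.
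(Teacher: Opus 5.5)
Your proof is correct and is essentially the paper's argument. Your Case~2 takes the first and last points $y,y'$ of $[x,x']\cap\partial A$, bounds the two end pieces by the mean value inequality plus continuity along the segment, and bounds the middle by the $L$-Lipschitz hypothesis on $\partial A$; this is exactly the paper's argument, with $a,b$ obtained from the $\inf$ and $\sup$ of $\{t:(1-t)x+tx'\notin A\}$. Two things need cleaning up: your claim that the exterior option ``cannot occur'' is false as stated, and the final o-minimal decomposition is unnecessary, because a segment whose interior misses $\adh A$ has both endpoints in $\partial A$ and is handled directly by the hypothesis on $\partial A$.
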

\begin{proof}
Let $x,y\in \adh A$ and define $$E:= \{t\in[0,1]:(1-t)x+ty\notin A\},\;\; t_0:=\inf E,\quad \mbox{\rm  and}\quad t_1:= \sup E.$$
 If $E$ is empty then the desired fact follows from the Mean Value Theorem. Otherwise, the corresponding points $a:=(1-t_0)x+t_0y$ and $b:=(1-t_1)x+t_1y$ being in the boundary of  $A$, we have by assumption  $|f(a)-f(b)|\le L|a-b|$.   By the Mean Value Theorem, we have $|f(x)-f(a)|\le K|x-a|$, where $K$ is the bound of the partial derivatives on $A$, and the same inequality holds for $t_1$ and the points $b$ and $y$. Finally, by the triangle inequality, we get $$|f(x)-f(y)|\le |f(x)-f(a)|+|f(a)-f(b)|+|f(b)-f(y)|\le K|x-a|+L|a-b|+K|b-y|,$$ which is not greater than  $$L'|x-a|+L'|a-b|+L'|b-y|=L'|x-y|,$$
where $L'=\max(L,K)$. 
\end{proof}

We first give a result in the  case where the domain of the mapping is a definable manifold.
\begin{thm}\label{main}Assume that the o-minimal structure admits $\cc^\infty$ cell decomposition.
Let $M\subset \R^n$ and $N\subset \R^k$ be  definable $\cc^\infty$ submanifolds. For any definable  Lipschitz mapping $f:M\to N$  and  any $\ep\in \spl(M)$ and $\mu>0$, there exists a  $(Lip(f)+\mu)$-Lipschitz mapping $g\in\mathcal{D}^\infty (M,N) $ such that
\begin{enumerate}[(i)]
 \item\label{item_i} $|f-g|<\ep$
 \item\label{item_ii} $|d_x g|\le  \mathring{L}_f(x)+\mu.$
\end{enumerate}
\end{thm}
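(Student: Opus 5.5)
The plan is to reduce to Corollary \ref{cor_c_infty} applied to $f$ viewed as a map into $\R^k$, and then push the result back into $N$ with the closest point retraction $\pi_N$. There are two independent issues. First, $g$ must take values in $N$ without spoiling the derivative bound. Second, and this is the real point, the pointwise bound $|d_xg|\le \mathring{L}_f(x)+\mu$ must be upgraded to a \emph{global outer} Lipschitz bound $Lip(f)+\mu$. Since $M$ is a manifold, it is locally closed, and the corollary applies to it. Since $f$ is Lipschitz, $\mathring{L}_f\le Lip(f)$ everywhere.

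\emph{Step 1 (a scale function on $M$).} For $x\in M$, let $r(x)$ be the supremum of $r\le 1$ such that $\bou(x,r)\cap M$ is the graph, over an open subset of $x+T_xM$, of a $\cc^1$ map with derivative of norm $<\mu$. This function is definable and positive on $M$, and it is locally bounded below on $M$. Replacing it by a smaller continuous positive definable function, I then set
$$\rho(x):=\inf_{y\in M}\big(r(y)+|x-y|\big).$$
This function is definable, $1$-Lipschitz, positive, and satisfies $\rho\le r$. For $y\in \bou(x,\rho(x))\cap M$, I lift the segment $[x,y]$, projected onto $x+T_xM$, to the graph. This gives a $\cc^1$ arc in $M$ of length at most $(1+\mu)|x-y|$.

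\emph{Step 2 (approximation and retraction).} Choose $\delta\in\D^+(N)$ so small that $\pi_N$ is defined on $U_N^{\delta}$ and $|d_z\pi_N|\le 1+\mu$ there. This is possible by Definable Choice, since $d\pi_N$ equals the orthogonal projection onto $T_zN$ on $N$. Then choose $\ep'\in \spl(M)$ with
$$\ep'\le \min\big(\ep/2,\ \mu\rho,\ \delta\circ f/2\big).$$
Apply Corollary \ref{cor_c_infty} to get $h\in\D^\infty(M,\R^k)$ with $|f-h|<\ep'$ and $|d_xh|\le \mathring{L}_f(x)+\mu$. Set $g:=\pi_N\circ h$. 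Because $d(h(x),N)\le|h(x)-f(x)|$, we get
$$|g-f|\le |\pi_N h-h|+|h-f|\le 2\ep'<\ep,$$
and $|d_xg|\le(1+\mu)(\mathring{L}_f(x)+\mu)$. Rescaling $\mu$ at the start gives (\ref{item_i}) and (\ref{item_ii}).

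\emph{Step 3 (global Lipschitz bound).} This is the main obstacle, since the derivative bound only controls $g$ along arcs. Take $x,y\in M$ and split into two cases. If $|x-y|<\rho(x)$, integrate $dg$ along the arc of Step 1. This gives
$$|g(x)-g(y)|\le (1+\mu)^2(Lip(f)+\mu)|x-y|.$$
If $|x-y|\ge\rho(x)$, then $\rho(y)\le\rho(x)+|x-y|\le 2|x-y|$ because $\rho$ is $1$-Lipschitz. Hence
$$|g(x)-g(y)|\le Lip(f)|x-y|+2\ep'(x)+2\ep'(y)\le \big(Lip(f)+6\mu\big)|x-y|.$$
After rescaling $\mu$, both cases give the $(Lip(f)+\mu)$-Lipschitz bound. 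The delicate points are the definability and positivity of $r$ (or of a continuous minorant of it), and the fact that the graph property at radius $\rho(x)\le r(x)$ really does give short arcs. If the graph description turns out to be awkward, the fallback is to invoke Lemma \ref{lem_distance_loc} pointwise and use Definable Choice to extract a definable radius.
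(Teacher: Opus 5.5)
\paragraph{Where the proof fails.}
Your route differs from the paper's and can be made to work, but Step 1 is false as written, and Case 1 of Step 3 depends on it.

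Being the graph of a $\cc^1$ map with small derivative over an \emph{open subset} $\Omega$ of $x+T_xM$ does not ensure that $\Omega$ contains the segment from $x$ to the projection of $y$. The reason is that $\bou(x,r)\cap M$ need not be closed in $\bou(x,r)$. Take the slit plane $M=\R^2\setminus\{(s,0):s\ge 0\}$. Every $\bou(x,r)\cap M$ is trivially such a graph, so $r\equiv\rho\equiv 1$. Yet $x=(1,\eta)$ and $y=(1,-\eta)$ satisfy $|x-y|=2\eta<\rho(x)$, while every arc in $M$ joining them has length close to $2$.

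Case 1 then has no arc to integrate along, and nothing else in your argument controls $|g(x)-g(y)|$. For $f\equiv 0$, the only information is $|g|<2\ep'$ and $|dg|\le\mu$. This allows $g$ to jump by an amount of order $\ep'$ across the slit, and $\ep'$ does not shrink there, so $g$ need not be Lipschitz.

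Your fallback does not repair this as stated. The condition ``every $y\in\bou(x,r)\cap M$ is joined to $x$ by an arc of length $\le(1+\mu)|x-y|$'' quantifies over arcs. It is not a definable condition (the inner metric is not definable in general), so Definable Choice cannot be applied to it.

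\paragraph{The fix.}
Cap the radius by the distance to the frontier: use $\min\big(r(x),d(x,\adh M\setminus M)\big)$. This is definable, positive, and locally bounded below, since $M$ is locally closed. Then $\bou(x,r)\cap M=\bou(x,r)\cap\adh M$ is closed in $\bou(x,r)$, and a connectedness argument goes through:
\begin{enumerate}
\item Consider the maximal initial part of the segment from $x$ to $P(y)$ that lies in $\Omega$. On it, the lift $z+\phi(z)$ stays within distance $\sqrt{1+\mu^2}\,|x-y|$ of $x$.
\item The lift converges at the endpoint of this part, thanks to the bound on $d\phi$.
\item The limit lies in $M$ by closedness, hence lies over $\Omega$.
\item So the whole segment lies in $\Omega$ as soon as $\sqrt{1+\mu^2}\,|x-y|<r(x)$.
\end{enumerate}
Run Case 1 for $|x-y|<\rho(x)/2$ and Case 2 otherwise; only the constants change.

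A minor point in Step 2: $|h(x)-f(x)|<\delta(f(x))/2$ does not by itself put $h(x)$ in $U_N^{\delta}$, because $\delta$ is evaluated at $\pi_N(h(x))$ there. Choose $\delta$ instead so that $\pi_N$ is defined with $|d\pi_N|\le 1+\mu$ on each ball $\bou(z,\delta(z))$, $z\in N$.

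\paragraph{Comparison with the paper.}
With this repair your proof is correct and genuinely different from the paper's.

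The paper proceeds in three steps:
\begin{enumerate}
\item It extends $f$ to a thin tube around $M$ as $\pi_N\circ f\circ\pi_M$. This requires proving that the retractions are globally $(1+\mu)$-Lipschitz on thin tubes, by a two-scale argument much like your Step 3.
\item It takes $\ep\to 0$ at $\partial M$, so that $g=f$ on $\partial M$.
\item It concludes with the straight-segment Observation \ref{lip}.
\end{enumerate}

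Your argument stays intrinsically on $M$. It needs no extension of $f$ and only first-order control of $\pi_N$ near $N$. It also gives (ii) directly for $\mathring{L}_f$ of the original $f$, rather than for the extended map. The price is the scale function and the lifting lemma. The paper's requirement that $\ep\to 0$ at $\partial M$ plays exactly the role of the cap $\rho\le d(\cdot,\partial M)$ that your Step 1 was missing.
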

\begin{proof}
 We first show that for each  $\mu<1$, the mapping $f$ can be extended to a  definable $(L+\mu)$-Lipschitz mapping $\tilde{f}$ on a neighborhood of $M$, which will reduce the proof to the case where $M$ is open in $\R^n$. Indeed, it suffices to show that $M$ and $N$ have neighborhoods $V$ and $W$ such that the closest point retractions $\pi_M:V\to M$ and  $\pi_N:W\to N$ are well-defined and $(1+\mu)$-Lipschitz, since the desired extension will then be given by $\tilde{f}:= \pi_N\circ f\circ \pi_M$.  We will focus on $M$, the argument for $N$ being analogous. Note first that since  $d_x \pi_M$ is for every $x\in M$ the orthogonal projection onto $T_xM$, we have $|d_x \pi_M|= 1$ for such $x$. By Definable Choice, there thus must be  $\delta \in \D^+(M)$ such that for each $x\in M$ the mapping $\pi_M$ is $(1+\mu)$-Lipschitz on $\bou(x,\delta(x))$. We claim  that $V:=U^{\mu\delta/4}_M$ is the desired neighborhood. Indeed, if $x$ and $y$ are two points of $V$ such that $|x-y|<\frac{\delta\circ\pi_M(x)}{2}$, they both belong to $\bou(\pi_M(x),\frac{1+\mu}{2}\delta\circ\pi_M(x))$, which for $\mu<1$ is included in $ \bou(\pi_M(x),\delta\circ\pi_M(x))$, on which $\pi_M$ is $(1+\mu)$-Lipschitz. Of course, the same argument applies when
$|x-y|<\frac{\delta\circ\pi_M (y)}{2}$. Finally, to address the case where  $|x-y|\ge \max(\frac{\delta\circ\pi_M(x)}{2},\frac{\delta\circ\pi_M(y)}{2})$, notice that for all $x$ and $y$ in $V$ we have
$$|\pi_M(x)-\pi_M(y)|\le |x-y|+|x-\pi_M(x)|+|y-\pi_M(y)|\le |x-y|+\frac{\mu}{4}\delta\circ\pi_M(x)+\frac{\mu}{4}\delta\circ\pi_M(y), $$
 which is not greater than $(1+\mu)|x-y|$ as soon as   $|x-y|\ge \max(\frac{\delta\circ\pi_M(x)}{2},\frac{\delta\circ\pi_M(y)}{2})$, showing that $\pi_M$ is $(1+\mu)$-Lipschitz on $V$.
 %Similarly, $\tilde{\ep}:=\ep\circ \pi_M$  is a continuous definable extension of $\ep$ to some  definable open neighborhood of  $M$.
Since it now suffices to prove the theorem for $\tilde{f}:=\pi_N\circ f\circ \pi_M$ with $\tilde{\ep}:=\ep\circ \pi_M$, we will assume from now that  $M$ is open without changing notations.

Fix $\mu>0$. Possibly replacing $\ep(x)$ with the function $\frac{\ep(x)d(x,\partial M)}{\sup\{d(a, \partial M)\, :\, |a|=|x|\}}\le \ep$, we can also assume that  $\ep(x)$ goes to $0$ as $x$ approaches $\partial M$, and possibly replacing it with $\min (\ep,\mu )$, we can assume it to be smaller than $\mu$. By Corollary \ref{cor_c_infty}, %\cite[Theorem 3]{fischer}
 there exists a definable $\cc^\infty$ inner Lipschitz mapping $g:M\to\R^k$ such that $|f-g|<\ep$ and
\begin{equation}\label{eq_gtilda}
	|d_xg|\le  \mathring{L}_f(x)+\mu\le  Lip(f)+\mu .\end{equation}
  Since $f$ is Lipschitz, it continuously extends to a Lipschitz mapping on $\adh M$ with the same Lipschitz constant. Since $\ep$ tends to zero at boundary points, $g(x):=f(x)$ extends continuously $g$ at boundary points, which means that  $g$ extends continuously to $\adh M$, inducing a $Lip(f)$-Lipschitz mapping on $\partial M$. By Observation \ref{lip} and (\ref{eq_gtilda}),  $g$ must be $(Lip(f)+\mu)$-Lipschitz on $M$.
\end{proof}

  Corollary \ref{cor_sing} will address the case where the mapping is defined on a possibly nonsmooth definable subset $A$ of $\R^n$. Its proof will make use of  the  definable version of Kirszbraun's theorem, which is a very delicate result established in \cite{AF}:
\begin{thm}\label{kirsz}
Let $A\subset\R^n$ be a definable set and $f:A\to\R^k$ a definable $L$-Lipschitz map.
There exists a definable  $L$-Lipschitz map  $\tilde f:\R^n\to \R^k$ such that $\tilde f_{|A}=f$.
\end{thm}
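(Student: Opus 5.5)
The plan is to avoid the classical transfinite proof of Kirszbraun's theorem. That proof extends $f$ one point at a time, choosing at each new point $x$ a value in
$$F(x):=\bigcap_{a\in A}\adh{\bou}\bigl(f(a),L|x-a|\bigr),$$
and such pointwise choices, even if made definably via Definable Choice, have no reason to fit together into a Lipschitz map. Instead I would use an \emph{explicit formula} for a Kirszbraun extension built only from first-order operations (infima over definable families, convex envelopes, derivatives), so that definability is automatic and the Lipschitz bound comes from convex analysis. The formula I have in mind is the one of Azagra, Le Gruyer and Mudarra, which produces the extension as a partial gradient of a convex envelope.

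\emph{Step 1: reductions.} Rescaling $f$ to $f/L$ (the case $L=0$ being trivial), we may assume $L=1$. Since $f$ is Lipschitz, it extends uniquely to a $1$-Lipschitz map on $\adh A$. This extension is definable, because its graph is the closure of the graph of $f$. We may therefore assume that $A$ is closed.

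\emph{Step 2: the convex construction.} On $\R^n\times\R^k$ consider
$$G(x,y):=\inf_{a\in A}\Bigl(\langle f(a),y-\tfrac{1}{2}f(a)\rangle+\tfrac12|x-a|^2\Bigr)+\tfrac12|y|^2$$
(up to the precise normalisation used by Azagra--Le Gruyer--Mudarra), and let $\Phi:=\mathrm{conv}(G)$ be its convex envelope. I would then set
$$\tilde f(x):=\nabla_y\Phi(x,0).$$
\emph{Definability.} $G$ is definable, being an infimum over the definable set $A$ of a definable family. By Carath\'eodory's theorem,
$$\mathrm{conv}(G)(z)=\inf\Bigl\{\sum_{i=1}^{n+k+1}\lambda_iG(z_i)\Bigr\},$$
the infimum being over convex combinations $\sum\lambda_iz_i=z$ with $\lambda_i\ge0$, $\sum\lambda_i=1$. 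This is a first-order definition, so $\Phi$ is definable. Its partial gradient, wherever it exists, is then definable as a limit of difference quotients.

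\emph{Step 3: the analytic core, which I expect to be the main obstacle.} One has to show three things:
\begin{enumerate}[(a)]
\item $\Phi$ is finite and of class $\cc^{1,1}$, with $\Phi-\tfrac12|\cdot|^2$ suitably semiconcave;
\item $\nabla_y\Phi(a,0)=f(a)$ for $a\in A$, that is, $\Phi$ agrees with $G$ to first order along $A\times\{0\}$;
\item the resulting map $x\mapsto\nabla_y\Phi(x,0)$ is $1$-Lipschitz.
\end{enumerate}
For (c), the key inequality is that $1$-Lipschitzness of $f$ is equivalent to the $1$-jet $(f(a),\dots)$ satisfying the $\cc^{1,1}$ convex Whitney-type compatibility condition at the points $(a,0)$. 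A $\cc^{1,1}$ function $\Phi$ such that $\tfrac12|z|^2-\Phi$ is also convex has a $1$-Lipschitz gradient, and $\nabla_y\Phi(\cdot,0)$ inherits this bound. I would follow the Azagra--Le Gruyer--Mudarra argument verbatim. Nothing in it is non-definable: it is a pointwise analysis of the convex envelope, and it only uses the Hilbertian identity underlying Kirszbraun's theorem. If that route proved too delicate to check in the precise normalisation, the fallback would be to follow \cite{AF} and verify directly, using the finite intersection property of balls for finitely many $a\in A$, that the value defined by the formula lies in $F(x)$ for every $x$ and that the values at two points $x,x'$ satisfy the two-point Kirszbraun inequality.
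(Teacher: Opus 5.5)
The paper does not prove Theorem~\ref{kirsz}. It imports the result from Aschenbrenner--Fischer \cite{AF}, whose argument is a definable Helly-type argument designed for arbitrary definably complete expansions of ordered fields; that generality is what makes it ``delicate''. Your route is genuinely different and, over $\R$, much lighter. Once $\tilde f$ is given by a first-order formula in $(A,f)$, it is automatically definable. The proof that $\tilde f$ is an $L$-Lipschitz extension can then use any real analysis at all, so you do not need to check that ``nothing in it is non-definable''. Your definability paragraph is correct: an infimum over $A$ of a definable family, the convex envelope written with boundedly many points via Carath\'eodory, and partial derivatives are all first-order. This gives a short proof valid in \emph{any} expansion of the real field (o-minimality is not even used), but it loses the generality of \cite{AF}. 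Step~1 is harmless but unnecessary.

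The analytic core, however, is wrong as you state it, and not merely up to normalisation. Your $G$ is not the Azagra--Le Gruyer--Mudarra function.
For $f=\mathrm{id}_{\R^n}$, the expression inside your infimum equals $\langle a,y-x\rangle+\tfrac12|x|^2$. Hence $G(x,y)=-\infty$ whenever $y\neq x$, and $\mathrm{conv}(G)\equiv-\infty$.
For $A=\{0,1\}\subset\R$ with $f(0)=0$, $f(1)=1$, a direct computation gives $\nabla_y\mathrm{conv}(G)(0,0)=\tfrac12\neq f(0)$.

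More fundamentally, the mechanism in (a)/(c) cannot coexist with (b). Suppose $\Phi$ is convex with $1$-Lipschitz gradient and $\nabla\Phi(a,0)=(u(a),f(a))$. Set $r=|a-a'|$ and $s=|u(a)-u(a')|$. The Baillon--Haddad co-coercivity inequality at $(a,0)$ and $(a',0)$ then gives
\[ |f(a)-f(a')|^2\le sr-s^2\le \tfrac14 r^2, \]
so only $\tfrac12$-Lipschitz maps can arise this way.

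The actual ALM function is
\[ g(x,y)=\inf_{a\in A}\Bigl\{\langle f(a),y\rangle+\tfrac{L}{2}\bigl(|x-a|^2+|y|^2\bigr)\Bigr\}+\tfrac{L}{2}\bigl(|x|^2+|y|^2\bigr). \]
It is only $2L$-semiconcave, so $\nabla\mathrm{conv}(g)$ is merely $2L$-Lipschitz; for $f=\mathrm{id}$ one gets $g(x,y)=\tfrac12|x+y|^2$. The sharp constant comes instead from $F:=\mathrm{conv}(g)-\tfrac{L}{2}|\cdot|^2$:
\begin{enumerate}[(1)]
\item both $\tfrac{L}{2}|\cdot|^2\pm F$ are convex, so $\nabla F$ is $L$-Lipschitz;
\item $\nabla_yF(x,0)=\nabla_y\mathrm{conv}(g)(x,0)$.
\end{enumerate}
The $L$-Lipschitz property of $f$ is exactly Le Gruyer's $\cc^{1,1}$ compatibility condition for the jets with value $0$ and gradient $(0,f(a))$ at the points $(a,0)$, and this yields (b).

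With this formula, and citing their theorem for (a)--(c), your definability argument does prove the statement. The ``fallback'' in your last sentence is not an argument.
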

\begin{rem}\label{lext}
In the case $k=1$,  such an extension can however easily be obtained by setting $\tilde f(x):=\inf\{f(a)+L|x-a|\,:\, a\in A\}$. \end{rem}

\begin{cor}\label{cor_sing}
 Assume that the o-minimal structure admits $\cc^\infty$ cell decomposition.
Let $A\subset\R^n$ be a definable set. For any definable  Lipschitz mapping $f:A\to\R^k$  and  any $\ep\in \spl(A)$ and $\mu>0$, there exists a  $(Lip(f)+\mu)$-Lipschitz mapping $g\in\mathcal{D}^\infty (A,\R^k) $ such that $|f-g|<\ep$ on $A$.
\end{cor}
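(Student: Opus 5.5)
The natural route is to reduce to Theorem \ref{main} via the definable Kirszbraun theorem. Set $L:=Lip(f)$. By Theorem \ref{kirsz} there is a definable $L$-Lipschitz extension $\tilde f:\R^n\to\R^k$ of $f$. The difficulty is that $\ep$ is only defined (and positive) on $A$, whereas Theorem \ref{main} needs an error function on an open set. It also needs $\ep$ to be continuous and positive on a neighborhood of $A$.

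I would first reduce to the case where $A$ is closed. Since $f$ is Lipschitz, it extends uniquely to an $L$-Lipschitz map on $\adh A$. I would then shrink $\ep$ so that it extends to a positive continuous definable function on some open neighborhood $U$ of $A$ in $\R^n$. Concretely, I would replace $\ep$ by a function of the type
\[
\ep'(x):=\min\Bigl(\ep(x),\, \inf_{a\in A}\bigl(\ep(a)+|x-a|\bigr)\Bigr),
\]
or use a similar construction with $d(\cdot,\pa A)$ as in the proof of Theorem \ref{main}. The point is to obtain $\hat\ep\in\spl(U)$, with $U$ an open definable neighborhood of $A$, such that $\hat\ep\le\ep$ on $A$.

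One option is to take $U:=\{x: d(x,\adh A\setminus A)>0\}$ intersected with a suitable set. A simpler option is as follows. Since $\ep$ is continuous and positive on $A$, Definable Choice gives, for each $a\in A$, a radius on which $\ep>\ep(a)/2$. Let $U$ be the union of the corresponding balls, and define $\hat\ep(x):=\frac12\sup\{\ep(a):a\in A,\ |x-a|<\rho(a)\}$, then take a continuous definable minorant of it. If $A$ is not locally closed, I would rather work directly on $U:=\R^n\setminus(\adh A\setminus A)$, which is open and contains $A$ because $\adh A\setminus A$ is closed in $\R^n$ minus... I expect this bookkeeping to be the main technical nuisance. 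The cleanest approach is probably to use $U:=\R^n\setminus \pa A$ when $A$ is locally closed, and in general to pass to the open set $U$ where a continuous positive extension of $\ep$ exists (the Tietze-type extension $x\mapsto \inf_{a\in A}(\ep(a)+|x-a|)$ is $1$-Lipschitz and positive off $\adh A$, and on $A$ it is bounded above by $\ep$ but might vanish at points of $\pa A$, which lie outside $A$ anyway).

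Next, I would apply Theorem \ref{main} with $M:=U$ (an open definable, hence $\cc^\infty$, submanifold), $N:=\R^k$, the map $\tilde f_{|U}$ (which is $L$-Lipschitz), the error $\hat\ep$, and $\mu$. This yields $g\in\D^\infty(U,\R^k)$ that is $(L+\mu)$-Lipschitz on $U$ and satisfies $|g-\tilde f|<\hat\ep$ on $U$. Restricting to $A$ gives $|g-f|<\ep$ on $A$. Moreover $g$ is $\cc^\infty$ on the neighborhood $U$ of $A$, so $g\in\D^\infty(A,\R^k)$ in the paper's sense, and $g_{|A}$ is $(L+\mu)$-Lipschitz.

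The main obstacle I anticipate is producing the open neighborhood $U$ together with a positive continuous definable extension of $\ep$. Everything else is a direct invocation of Theorems \ref{kirsz} and \ref{main}. The Lipschitz-constant bookkeeping is immediate, because Theorem \ref{main} already controls the global Lipschitz constant and not just the derivative.
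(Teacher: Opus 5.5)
Your proposal is correct, and its skeleton is the paper's: extend $f$ by Theorem \ref{kirsz}, produce a positive continuous definable error function on an open neighborhood of $A$, apply Theorem \ref{main} there, and restrict to $A$. The difference is in how the error function is built, and here your choice is the more robust one.

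The paper normalizes $\ep\le 1$ and uses Riesz's formula $\tilde\ep(x)=\sup_{a\in A}\ep(a)\,d(x,A)/|a-x|$ for $x\notin A$. This is a genuine continuous extension of $\ep$ to $\R^n\setminus\pa A$. However, it vanishes identically on $\pa A$, so it is positive on an open neighborhood of $A$ only if $\pa A$ is closed, i.e.\ only if $A$ is locally closed.

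Your inf-convolution $\hat\ep(x)=\inf_{a\in A}(\ep(a)+|x-a|)$ is only a minorant of $\ep$ on $A$. That suffices, because $|f-g|<\ep$ is required only on $A$. It is definable and $1$-Lipschitz. At a point $x\in\adh A$ it is positive exactly when $\liminf_{A\ni y\to x}\ep(y)>0$. Hence $U:=\{\hat\ep>0\}$ is an open definable neighborhood of $A$, possibly containing points of $\pa A$, and your argument works for every definable $A$, locally closed or not.

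When you write it up, define $U$ this way explicitly and verify $A\subset U$: if $\ep>\ep(a_0)/2$ on $A\cap\bou(a_0,r)$, then $\hat\ep(a_0)\ge\min(\ep(a_0)/2,r)>0$. Also drop two of your preliminary ideas. The ``reduction to $A$ closed'' is unusable, since $\ep$ need not admit a positive extension to $\adh A$. The candidate $U=\R^n\setminus(\adh A\setminus A)$ is not open unless $A$ is locally closed.
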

\begin{proof}
Thanks to Theorem \ref{kirsz}, we can extend $f$ to $\R^n$. Moreover,  we can extend $\ep$ continuously to a neighborhood of $A$  using  Riesz's formula (we may assume $\ep\le 1$)
\begin{equation*}%\label{eq_riesz}
 \tilde\ep(x)=\begin{cases}\sup\limits_{a\in A}\frac{\ep(a)d(x,A)}{|a-x|} & \mbox{\rm for}\; x\notin A\\
\ep(x) & \mbox{\rm for}\; x\in A\end{cases}
\end{equation*} The needed approximation is then provided by Theorem \ref{main}.
\end{proof}

\begin{rem}
	 In the case where $f$ is bounded, we can require that $Lip(g)=Lip(f)$. See \cite[Proof of Corollary $4$]{fischer} for details.
%		\item If $Lip_f(x)$ denotes the {\it local} Lipsc\begin{enumerate}
%	\itemhitz constant of $f:M\to \R^k$ at $x$ then, in the case where $A$ is open,  we can require in Theorem \ref{main}, in addition to (\ref{item_i}) and (\ref{item_ii}), that  This follows from \cite[Theorem 2]{fischer} and from the proof of Theorem \ref{main}.

%	\end{enumerate}
\end{rem}

Another application of Theorem \ref{main} is the following variant of Mostowski's separation lemma \cite{mostowski}.

\begin{cor}\label{cor_tdn}Let $M$ be a definable $\cc^\infty$ submanifold of $\R^n$ and let  $A, B$ be disjoint definable subsets of $M$. If $A$ and $B$ are closed in $M$ then there exists a Lipschitz function  $g\in \D^\om(M)$ which is positive on $A$ and negative on $B$.
 \end{cor}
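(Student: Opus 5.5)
The plan is to construct a definable Lipschitz function $h:M\to\R$ that is positive on $A$ and negative on $B$. We then approximate $h$ by a $\cc^\infty$ function via Theorem \ref{main}, with an error function $\ep$ small enough that the signs are preserved.

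Fix a definable $\cc^\infty$ submanifold $M$ and disjoint closed (in $M$) definable sets $A,B$. We may assume both are nonempty; otherwise take a constant function. Set
$$h(x):=\frac{d(x,B)-d(x,A)}{1+d(x,A)+d(x,B)}.$$
A simpler candidate would be the plain difference $d(\cdot,B)-d(\cdot,A)$, which is $2$-Lipschitz. On $A$ we have $d(x,A)=0$, and $d(x,B)>0$ when $x\in A$ because $B$ is closed in $M$, $A\cap B=\emptyset$ and $x\in M$. Caution: $d(x,B)$ means the Euclidean distance to $B\subset\R^n$, and a point of $A$ could lie in $\adh B\setminus M$ only if $x\notin M$, which cannot happen. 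So the difference is positive on $A$ and negative on $B$, and we take $h:=d(\cdot,B)-d(\cdot,A)$ itself. It is definable, continuous and $2$-Lipschitz.

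Next choose $\ep\in\spl(M)$ with $\ep(x)<\tfrac12\max(d(x,A),d(x,B))$. For instance take $\ep:=\tfrac12\,|h|+\tfrac12\,d(\cdot,A)\,d(\cdot,B)$, or more simply $\ep:=\tfrac12(d(\cdot,A)+d(\cdot,B))$. This is continuous, definable and positive on $M$: its positivity follows since $d(x,A)+d(x,B)=0$ would force $x\in\adh A\cap\adh B\cap M=A\cap B=\emptyset$. On $A$ we have $\ep=\tfrac12 d(\cdot,B)=\tfrac12 h$, so any $g$ with $|g-h|<\ep$ satisfies $g>h-\ep=\tfrac12 h>0$. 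Symmetrically $g<0$ on $B$. Theorem \ref{main}, applied with $N=\R$, $f=h$, this $\ep$ and $\mu=1$, yields $g\in\D^\infty(M)$ that is $3$-Lipschitz with $|h-g|<\ep$, hence positive on $A$ and negative on $B$.

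The only delicate point is the strict positivity of $\ep$ and of $h$ on $A$. Both rest on the fact that closedness in $M$ together with disjointness gives $\adh A\cap\adh B\cap M=\emptyset$; everything else is a direct invocation of Theorem \ref{main}.
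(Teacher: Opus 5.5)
Your proposal is correct and is essentially the paper's proof: the paper also takes $f=d(\cdot,B)-d(\cdot,A)$ and applies Theorem \ref{main} with error $\frac{d(\cdot,A)+d(\cdot,B)}{2}$, which gives $g>\frac{d(\cdot,B)}{2}$ on $A$ and $g<-\frac{d(\cdot,A)}{2}$ on $B$. As a small clean-up, drop the abandoned normalized candidate for $h$, and drop the stated target $\ep<\tfrac12\max(d(\cdot,A),d(\cdot,B))$, which the $\ep$ you actually choose violates; that target is never used, since only $\ep=\tfrac12|h|$ on $A\cup B$ is needed.
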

\begin{proof}
 Let us define a Lipschitz function  by setting for $x\in M$, $f(x):=d(x,B)-d(x,A)$. By Theorem \ref{main}, there is a Lipschitz function $g\in \D^\om(M)$ such that $|f(x)-g(x)|<\frac{d(x,A)+d(x,B)}{2}$. This implies that $g(x)> \frac{d(x,B)}{2}$ on $A$, and $g(x)<- \frac{d(x,A)}{2}$ on $B$.
\end{proof}

\end{document}